\begin{document}
\raggedbottom

\title{Persistent and Zigzag Homology
\thanks{GC and BN were supported by Altor Equity Partners AB through Unbox AI (\url{unboxai.org}).  BN was also supported by the US Department of Energy, Contract DE-AC02-76SF00515, and the Defense Advanced Research Projects Agency (DARPA) under Agreement No. HR00112190040. AD was supported by Burt and Deedee McMurtry Fellowship.  DoD distribution statement: Approved for public release; distribution is unlimited.}
}
\subtitle{A Matrix Factorization Viewpoint}


\author{Gunnar Carlsson        \and
        Anjan Dwaraknath       \and
        Bradley J.~Nelson
}


\institute{G. Carlsson \at
              Department of Mathematics\\
              Stanford University          
           \and
           A. Dwaraknath \at
              Institute for Computational and Mathematical Engineering\\
              Stanford University\\
              \emph{Present address: Cerebras Systems Inc.}
            \and
            B. Nelson \at
                Institute for Computational and Mathematical Engineering\\ 
                Stanford University\\
                \email{bradnelson@uchicago.edu}\\
                \emph{Present address: Department of Statistics, University of Chicago}
}

\date{\today}

\maketitle

\begin{abstract}
Over the past two decades, topological data analysis has emerged as a field of applied mathematics with new applications and algorithmic developments appearing rapidly.  Two fundamental computations in this field are persistent homology and zigzag homology.  In this paper, we show how these computations in the most general case reduce to finding a canonical form of a matrix associated with a type A quiver representation, which in turn can be computed using factorizations of associated matrices.  We show how to use arbitrary induced maps on homology for computation, providing a framework that goes beyond the capabilities of existing software for topological data analysis.  Furthermore, this framework offers multiple opportunities for parallelization which have not been previously exploited.  We provide several examples of the utility of this framework, demonstrate parallel speedups, and report on significant improvements in comparison to existing software.
\keywords{zigzag homology \and persistent homology \and quiver representations \and parallel algorithms}
\subclass{55U99 \and 16G20 \and 13P20}
\end{abstract}

\section{Introduction}\label{sec:intro}

For most of its existence, algebraic topology has been a subject in which computations have been carried out entirely ``by hand".  The fundamental reason for this fact is that singular homology, the structure for computing algebraic invariants  directly on a space without any chosen triangulation, performs linear algebra on vector spaces of uncountably infinite dimension, and is therefore not suitable for any  direct computational method.  As a consequence, numerous methods for computing homology indirectly, such as the Mayer-Vietoris sequence, the long exact sequence of a pair, the nerve lemma,  the K\"{u}nneth formula, and the Serre spectral sequence have been developed.  The methods are so powerful that they can begin with the computation of the homology of a point as input (which is trivial) and arrive at the homology of any space of the homotopy type of a simplicial complex.  The key tool is the notion of {\em functoriality}, first identified by Emmy Noether (see \cite{Hilton88}), which consists of the idea that topological invariants should not be treated as numbers but as algebraic objects such as groups and vector spaces, and crucially that maps between spaces induce homomorphisms of these objects.    Computational topology, on the other hand, has operated mostly by direct computation on the chain level.  These computations are often very expensive, both in time and space.  The indirect methods that are required even to make computations possible in traditional topology can be used in  applied and computational topology  to make computations much smaller and faster, often by making parallelization possible.  In addition, though, functoriality is used in the creation of invariants of that in an appropriate sense measure the shape of the point cloud.  Persistent homology \cite{Robins99} is the first example of this idea, but much work has been done in extending it, notably to {\em zigzag persistence} \cite{ZZtheory2010}.  In this paper, we adapt and implement methods built on functoriality and evaluate the improvements they enable in some key situations.

\subsection{Contributions and Related Work}

Our main contributions are:
\begin{enumerate}
\item{Systematic methods for computing induced homomorphisms on homology with field coefficients. These methods are implemented in software, and interpreted using matrix factorization methods which are described in detail.} 
\item{Sequential and parallel algorithms to extract interval indecomposables (barcodes) from any finite type A quiver representation over any field.  These are also implemented in software using matrix factorization methods and described in detail.}
\item{Application of these methods to the computation of persistent and zigzag homology.}
\item{Evaluation of improvements in speed made possible by our tools.}
\end{enumerate} 

While we provide concrete algorithms for all steps, we also operate at a high level of abstraction which will enable optimizations to be applied in future work. 

The study of persistent and zigzag homology as examples of type A quiver representations was begun in \cite{ZZtheory2010}, which is the theoretical basis for our algorithm.  This has led a variety of interesting theoretical and applied work as surveyed in \cite{Oudot}, but algorithmic implementations so far have not significantly leveraged this connection.  

An algorithm for computing persistent homology was first described in \cite{edelsbrunner2000topological} (for $\FF_2$ coefficients) and was extended to general fields in \cite{ZCComputingPH2005}, and an algorithm for computing zigzag homology was first described in \cite{ZZalg2009}.  These approaches operate on inclusion maps between spaces, and computations work directly on chain complex boundary matrices.  While both persistent and zigzag homology are known to be computable in matrix-multiplication time in the number of cells in a filtration \cite{ZZmatmultime2011}, sparsity considerations are typically much more important in practice, and in order to compute on large data sets, several approaches have been pursued.  First, there have been efforts to speed up computation of persistence through various optimizations \cite{desilvaDualitiesPersistentCo2011, chenPersistentHomologyComputation2011, bauerClearCompressComputing2014} and high performance implementations \cite{GUDHI15, Ripser19, Dionysus2, Eirene16, Phat2017, HYPHA19}.  Second, there have been efforts to reduce the inherent size of computations using methods that preserve the homotopy type of a space while reducing the size of its combinatorial representation \cite{mischaikowMorseTheoryFiltrations2013, dlotkoSimplificationComplexesPersistent2014, wilkersonComputingPersistentFeatures2014,  boissonnatStrongCollapsePersistence2018}.  Zigzag homology has received less attention than persistence, but similar efforts can be found in \cite{mariaZigzagPersistenceReflections2014, mariaDiscreteMorseTheory2019}.  The use of non-inclusion maps in persistent and zigzag homology has been somewhat limited in topological data analysis, although the case of simplicial maps has been investigated in \cite{DeySimplicalMap, kerberBarcodesTowersStreaming2019}, based on a strategy that uses zigzag homology to compute a persistence barcode, and the implementation of zigzag homology in Javaplex \cite{Javaplex} contains tools to compute induced maps for the bivariate witness construction \cite{ZZtheory2010, tausz2012}.

Our approach has several notable differences compared to existing computational approaches for persistent and zigzag homology.  First, we consider a two step approach, where first induced maps on homology are computed to form a quiver representation, and then we compute the barcode.  In contrast, existing approaches work almost exclusively on the level of chain complexes, missing out on the abstraction and compression that induced maps on homology afford.  Second, our approach works for general cell complexes, and general cell maps, and arbitrary fields, whereas some existing approaches are focused on the simplicial (or cubical) complexes and simplicial maps or are limited to $\FF_2$ coefficients \cite{DeySimplicalMap}.  Third, our approach offers multiple opportunities for parallelization, whereas existing implementations to zigzag homology are sequential in nature.  Computing induced maps on homology is trivially parallelizable, and our quiver algorithm also admits a divide and conquer parallelization scheme.  The first divide and conquer approach for zigzag homology was described at a high level in \cite{ZZalg2009}, which operates on chain complexes instead of induced maps on homology, and has not been implemented to the best of our knowledge.  Another approach observed the embarassingly parallel computation of induced maps, and provided a divide and conquer scheme for induced maps based on pullbacks \cite{skrabaParallelScalableZigzag}, but again these observations were not implemented in any parallel framework.  Greg Henselman has also had thoughts on a divide and conquer approach for zigzag homology \cite{henselman2017talk}. A scheme to simplify complexes that is trivially parallelizable for reasons similar to the trivial parallelization of induced maps is found in \cite{boissonnatStrongCollapsePersistence2018}.   Our parallelization scheme is different and complementary to existing efforts to use spectral sequences to parallelize homology calculations \cite{lewisParallelComputationPersistent2015, yoon2018}, as well as parallelization used in clearing and compression optimizations \cite{bauerClearCompressComputing2014}.  Finally, we unify computations for persistent and zigzag homology to a degree that is not seen in existing algorithms, in the sense that modifications to handle arrows of different directions are trivial when stated in terms of matrix factorizations.

Our approach is closest in spirit to the original paper on zigzag homology \cite{ZZtheory2010}, which started with induced maps on homology, and gave a constructive algebraic algorithm for the interval indecomposables for type A quiver representations.  In this paper, we address the computational questions that are necessary for computing arbitrary induced maps on homology, and an explicit algorithm for computing interval indecomposables.  In contrast to \cite{ZZtheory2010} and existing algorithms for zigzag homology \cite{Javaplex, Dionysus2, mariaZigzagPersistenceReflections2014, mariaDiscreteMorseTheory2019}, our algorithm does not explicitly use right filtrations, and instead uses an approach that involves a matrix factorization.  Available zigzag homology implementations can be found in Javaplex \cite{Javaplex}, which is based on the algebraic algorithm in \cite{ZZtheory2010}, and Dionysus \cite{Dionysus2}, which is based on the algorithm in \cite{ZZalg2009}, but neither of these implementations employs parallelism.

Portions of this work can also be found in the Ph.D. dissertations of two co-authors \cite{nelsonThesis2020, dwaraknathThesis2020}.  In this paper, we provide further complexity bounds and computational experiments, particularly for our divide-and-conquer algorithm.

As a companion to this work, we have released a new open-source package for the algorithms we describe.  Inspired by the basic linear algebra subprograms (BLAS) \cite{BLAS1979}, we call it the basic applied topology subprograms (BATS).  The library includes high-level C++ templates for the algorithms described, as well as compatible matrix implementations which are templated over a choice of field.  The code is freely available at \url{https://github.com/bnels/BATS}.

\subsection{Outline}

The rest of the paper is organized as follows:
We will review background necessary for computing homology and induced maps in \cref{sec:prelim}, along with relevant background on computing persistent and zigzag homology and a more formal treatment of classification of quiver representations and its connection to topological data analysis.
In \cref{sec:quiveralg} we will describe our sequential and parallel algorithms for computing indecomposables of A-type quiver representations, and give bounds on their complexity.  
In \cref{sec:applications} we provide several examples of how our methodology and software may be used, and demonstrate parallel speedups in our implementation.  We also compare the performance of our implementation to the zigzag homology implementation in Dionysus \cite{Dionysus2}, demonstrating significant performance improvements.

\section{Preliminaries}\label{sec:prelim}
We will now introduce the algebraic and topological tools necessary for our algorithm.  Specifically, we will need to compute homology of cell complexes, and compute induced maps on homology, and obtain quiver representations from diagrams of spaces.

 For a general reference for matrix computations, we defer to Golub and Van Loan's text \cite{GVL}.  For concepts in algebraic topology, we refer to Hatcher \cite{HatcherAT}.  Early sources for computing persistent homology and zigzag homology are \cite{edelsbrunner2000topological, ZCComputingPH2005} and \cite{ZZtheory2010, ZZalg2009} respectively.  Further background on computational topology can be found in \cite{edelsbrunnerHarerBook2010} and for an overview of modern TDA software we recommend \cite{otterRoadmapComputationPersistent2017}.

\subsection{Algebra and Notation}

{\bf Notation:} Our algorithm uses matrix factorizations to organize computation, and so we shall use Householder notation for linear algebra \cite{GVL, Householder}.  Upper case Greek or Roman letters such as $A$ or $\Lambda$ will refer to matrices, lower case Roman letters such as $a$ will refer to vectors, and lower case Greek letters such as $\lambda$ will refer to scalars.  This is not always consistent with notation found in algebraic topology or the TDA literature, which does not have conventions as strong as Householder notation, but we will attempt to use notation that is close to modern use.  One special symbol, $\partial$, will always refer to a boundary matrix.  Topological spaces will be denoted with calligraphic font as in  $\calX$.

We will prefer to work with matrix factorizations when possible.  Due to the abundance of subscripts in other contexts, we will use square brackets instead of subscripts for indexing.  Vectors will be assumed to be column vectors, and $v^T$ will denote the corresponding row vector.  $e_i$ will refer to the vector with $1$ in the $i$-th entry and $0$ elsewhere, with dimension determined by context.  When we need to access elements in vectors, we will use the notation $x[i]$ to denote the scalar value $e_i^T x$, or the $i$-th entry in $x$.  When we need to access elements of matrices, we will use the notation $A[i,j]$ to denote the scalar $e_i^T A e_j$, or the entry in the $i$-th row and $j$-th column of $A$.  When we want to indicate columns of matrices, we will use the notation $A[j]$ to denote the vector $A e_j$, or the $j$-th column of $A$.  As is standard in numerical linear algebra, indexes will begin with $1$, meaning the valid range of indexes for a vector $x\in \FF^n$ is $1,\dots,n$.  Asterisks indicate that a subscript or superscript runs over a range of values, determined by context.  For example, $F_\ast$ is often used to represent $F_k, k=0,1,\dots$.

Computations will be done in vector spaces over a fixed field $\FF$.  Computations in topological data analysis are typically done over finite fields, for example $\FF = \FF_2 = \ZZ/2\ZZ$, or the rationals $\QQ$, because homology requires exact computation of kernels and images.  Floating point arithmetic is typically avoided due to numerical issues.  One exception is Hodge theory, which uses the more familiar fields $\RR$ or $\CC$ -- for a numerical and application focused introduction see \cite{limHodge}.

\paragraph{Triangular Matrices}
A matrix $L$ is lower triangular if $L[i,j] = 0$ for $i < j$.  A matrix $U$ is upper triangular if $U[i,j] = 0$ for $i > j$.  We will use the symbols $L$ and $U$ to denote general lower and upper-triangular matrices respectively.  The inverse of a triangular matrix can be applied to a vector or matrix in the same time as an ordinary multiplication using using either the forward or backward substitution algorithm \cite{GVL}, which offers a distinct advantage compared to arbitrary matrices.

\paragraph{Pivot Matrices}
\begin{definition}\label{def:pivot_matrix}
A pivot matrix is a matrix in which every row and column has at most one non-zero element.
\end{definition}
In the context of matrix factorizations used in this paper, the non-zero element will always be 1 (the multiplicative identity of the field $\FF$) by convention.  The term pivot matrix refers to its use in recording pivots (last nonzeros of rows or columns) when computing matrix factorizations.
Pivot matrices are similar to permutation matrices in the sense that they map a single basis element to a single basis element, but contain the possibility that some rows and columns can be entirely zero so they are not generally invertible.

\paragraph{Echelon Pivot Matrices}
Echelon pivot matrices are pivot matrices with added structure. There are 4 types we consider
\begin{align}
  E_L &= \squareEL \;\;\;\; E_U = \squareEU\\
 \hat{E}_L &= \squareELh \;\;\;\;  \hat{E}_U = \squareEUh 
\end{align}
$E_L$ and $\hat{E}_U$ contain the pivots for variants of the column echelon form of a matrix, and $E_U$ and $\hat{E}_L$ contain the pivots for variants of the row echelon form of a matrix.  The $L$ and $U$ subscript indicates whether the matrix is lower or upper triangular.

\begin{definition}\label{def:el}
A matrix has the $E_L$ shape if it is the sum of rank 1 matrices created from basis vectors
\[ E_L = \sum_{(i,j)\in S} e_i e_j^T \]
where the set $S\subset \{1,\dots,m\} \times \{1,\dots,n\}$ contains the locations of the pivots. Since $E_L$ is a pivot matrix, for every $j$, there must be a unique $i$, therefore the pairs can be written as $(i(j),j)$. The function $i(j)$ is defined on the subset of columns that have a pivot, and must satisfy the following properties
\begin{enumerate}
\item $j_1<j_2 \implies i(j_1) < i(j_2)$ on the domain of $i(j)$
\item For every $j_1,j_2$ s.t. $j_1<j_2 \;\text{and}\; A[j_1] = 0 \implies A[j_2] =0$
\end{enumerate}
\end{definition}

The other echelon pivot matrices can be defined in terms of the $E_L$ shape.

\begin{definition}
 A matrix is of shape $E_U$ if its transpose is of shape $E_L$
 \[ (E_U)^T = E_L\]
\end{definition}

\begin{definition}
 A matrix is of shape $\hat{E}_L$ if its J-Conjugate is of shape $E_U$
 \[ J\hat{E}_LJ = E_U\]
\end{definition}

\begin{definition}
 A matrix is of shape $\hat{E}_U$ if its J-Conjugate is of shape $E_L$
 \[ J\hat{E}_UJ = E_L\]
\end{definition}

\paragraph{The J Matrix:} We will use $J$ to represent a linear operator that reverses row or column order:
\begin{definition}
$J$ is a square $n \times n$ matrix, such that
\[J[i,j] =
\begin{cases}
    1, & \text{if } i = n+1 - j\\
    0,              & \text{otherwise}
\end{cases}
\]
\end{definition}
In other words, it is the anti-diagonal permutation matrix.
Specifically when multiplied on the left, it reverses the row order. Similarly, it reverses the column order when multiplied on the right. It is its own transpose and inverse. We will often conjugate with the $J$ matrix, which reverses both row and column order and thus produces a reflection across the anti-diagonal. Note that this operation is distinct from taking the transpose of a matrix and cannot be expressed in terms of it.

The following are useful commutation relations between the J matrix and other matrix shapes

\begin{align}
  JL &= UJ \;\;\;\; &\squareDa\; \squareL = \squareU\; \squareDa \\
  JU &= LJ \;\;\;\; &\squareDa\; \squareU = \squareL\; \squareDa \\
  JE_L &= \hat{E}_UJ \;\;\;\; &\squareDa\; \squareEL = \squareEUh\; \squareDa \\
  JE_U &= \hat{E}_LJ \;\;\;\; &\squareDa\; \squareEU = \squareELh\; \squareDa
\end{align}

\subsection{Computing Homology of Spaces}\label{sec:computing_hom}
We will consider topological spaces encoded as cell complexes with a finite number of cells.  A {\em cell complex}, or CW complex $\cal X$ can be built inductively by starting with a discrete set of points (0-cells) $\calX^0$ called the $0$-skeleton, and inductively forming the $k$-skeleton $\calX^k$ from $\calX^{k-1}$ by adding open $k$-dimensional balls along their boundary to $\calX^{k-1}$ \cite{HatcherAT}.  Simplicial and cubical complexes are special cases of cell complexes which are often encountered in topological data analysis which offer a simplified interface for specifying cells and attaching maps.

From a cell complex $\cal X$, we apply the cellular chain functor to obtain a chain complex $C_\ast(\cal X)$.  We will consider this chain complex over a field $\FF$ instead of a more general ring, which will later allow us to use quiver representations.  The chain complex $C_\ast(\cal X)$ is a sequence of vector spaces $\{C_k(\cal X)\}$, $k=0,1,\dots$, with {\em boundary maps} $\partial_k: C_k \to C_{k-1}$, and $\partial_0 = 0$, with the property that $\partial_{k} \circ \partial_{k+1} = 0$.  

From a chain complex $C_\ast(\cal X)$, we can compute homology $H_k(\calX) = \ker \partial_{k} / \img \partial_{k+1}$.  This is a quotient vector space whose dimension encodes the number of $k$-dimensional ``holes'' in $\cal X$.  In practice, we can compute homology by computing matrix factorizations using the reduction algorithm of Zomorodian and Carlsson \cite{ZCComputingPH2005}
\begin{equation}
    \partial_k = R_k U_k
\end{equation}
where $R_k$ has unique column pivots (largest index with a non-zero entry) and $U_k$ is upper-triangular.  A basis for homology can be obtained by finding indices of zeroed-out columns of $R_k$ which do not appear as pivots in $R_{k+1}$, and taking the corresponding columns of $U_k$ as representatives of the coset in the quotient vector space $H_k(\cal X)$.  We will encode this idea of using a change of basis $U_k$ to reveal homology in the following definition:
\begin{definition}\label{def:hom_revealing_basis}
A {\em homology revealing} basis for $C_k$ is a pair $(B_k, \calI_k)$, where $B_k$ is a basis for $C_k$, and $\calI_k$ is an index set such that $\{b_i \in B_k\}_{i\in \calI_k} \subseteq B_k$ generates a basis for $H_k(C_\ast)$.  Explicitly, a basis for $H_k$ is
$$\{ [b_i] \mid b_i \in B_k, i\in \calI_k \}$$
\end{definition}
Note that a homology revealing basis is generally not unique.  Once we have chosen a basis $B_k$ and a set $\calI_k$, we will say a homology representative $x\in [x]$ is the preferred representative of $[x]$ if $x$ is written as linear combination of cycles exclusively in the set $\calI_k$.
From the reduction algorithm, we obtain a homology revealing basis $(U_k, \calI_k)$, where $\calI_k$ indexes the zeroed out columns of $R_k$ which do not appear as pivots in $R_{k+1}$.

When we consider a filtered cell complex $\calX_{t_0} \subseteq \calX_{t_1} \subseteq \dots$, we can put the rows and columns of $\partial_k$ in filtration order.  In this special case of filtrations, persistent homology can be computed using the reduction algorithm by adding an additional level of interpretation \cite{ZCComputingPH2005}.  Now every zeroed-out column of $R_k$ corresponds to a homology class which is born when the cell whose boundary is encoded in the corresponding column of $\partial_k$ is inserted.  This same class dies when a cell is inserted in $\partial_{k+1}$ which produces a column of $R_{k+1}$ with a pivot index matching the column index which produced homology.

\subsection{Computing Induced Maps}\label{sec:induced_maps}

Homology is a functor, meaning that topological maps $f: \calX \to \calY$ have associated linear maps $H_k(f):H_k(\calX) \to H_k(\calY)$.  We will consider maps $f$ which are cellular (meaning cells of $\calX$ map to sub-cell complexes of $\calY$), so can use the cellular chain functor to obtain a chain map $F_\ast : C_\ast(\calX) \to C_\ast(\calY)$.  In this section, we present an algorithm to compute an induced map on homology $\tilde{F}_k = H_k(f) : H_k(\calX) \to H_k(\calY)$ which can be used after running the reduction algorithm on $C_\ast(\calX)$ and $C_\ast(\calY)$ to obtain the induced map on homology in terms of the homology revealing bases.  While the reduction algorithm and its variants are used extensively for computing persistent homology, computing arbitrary induced maps is not very common in topological data analysis, and to the best of our knowledge \cref{alg:hom_ind_map} is the first account of how one may use the output of the reduction algorithm to compute arbitrary induced maps.

We consider a chain map $F_\ast : C_\ast \to D_\ast$, and homology revealing bases $(U_k^C, \calI_k^C)$, $(U_k^D, \calI_k^D)$.  The purpose of our algorithm is to write the image of a homology representative $[x]\in H_k(C_\ast)$, $[F_k x]\in H_k(D_\ast)$ in terms of its preferred representative.  This will allow us to read off coefficients to represent the induced map on homology $\tilde{F}_k: H_k(C_\ast) \to H_k(D_\ast)$ as a matrix.

\begin{algorithm}[htb]
\caption{Computation of induced map on homology.}
\label{alg:hom_ind_map}
\begin{algorithmic}[1]
\STATE{{\bf Input:} Homology representative $x = U_k^C[i]$ in $H_k(C_\ast)$,  $U_k^D, R_{k+1}^D$, from reduction algorithm applied to $\partial_\ast^D$, with index set $\calI^D_k$. Chain map $F_k$ in original basis.}
\STATE{{\bf Result:} Induced map on homology, $\tilde{F}_k[x]$}
\STATE{$y \gets (U_k^D)^{-1}F_k x$}
\STATE{$n \gets \dim D_k$}
\STATE{$\hat{\partial}_{k+1}^D \gets (U_k^D)^{-1} R_{k+1}$}
\FOR{$j = n,n-1,\dots,1$}
    \IF{$y[j] \ne 0$ and $j$ is a pivot of column $i$ of $\hat{\partial}_{k+1}^D$}
        \STATE{$\alpha \gets y[j] / \hat{\partial}_{k+1}^D[j,i]$}
        \STATE{$y \gets y - \alpha \hat{\partial}_{k+1}^D[i]$}
    \ENDIF
\ENDFOR
\RETURN{$y[\calI^D_k]$}
\end{algorithmic}
\end{algorithm}

\begin{proposition}
The homology class of $y$ in \cref{alg:hom_ind_map} is invariant.
\end{proposition}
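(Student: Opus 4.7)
The plan is to show that each update in the loop modifies $y$ by a vector that, when pulled back to the original chain basis via $U_k^D$, lands in $\img \partial_{k+1}^D$, so the homology class $[U_k^D y] \in H_k(D_\ast)$ is preserved. The initial value $y = (U_k^D)^{-1} F_k x$ satisfies $U_k^D y = F_k x$, and by \cref{eq:hom_induced_map} we have $[F_k x] = \tilde{F}_k [x]$, so the initial homology class is the desired one.

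First I would observe that $\hat{\partial}_{k+1}^D = (U_k^D)^{-1} R_{k+1}^D$, and since $R_{k+1}^D = \partial_{k+1}^D U_{k+1}^D$, every column of $R_{k+1}^D$ lies in $\img \partial_{k+1}^D$. Therefore the columns of $\hat{\partial}_{k+1}^D$ represent, in the basis $U_k^D$, elements of $\img \partial_{k+1}^D$; equivalently, $U_k^D \hat{\partial}_{k+1}^D[i] = R_{k+1}^D[i] \in \img \partial_{k+1}^D$. Each loop iteration replaces $y$ by $y - \alpha \hat{\partial}_{k+1}^D[i]$, which corresponds in the original chain basis to replacing $U_k^D y$ by $U_k^D y - \alpha R_{k+1}^D[i]$. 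Since the subtracted term is a boundary, the homology class is unchanged.

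I would also note (for completeness) that $U_k^D y$ remains a cycle throughout: the initial $U_k^D y = F_k x$ is a cycle because $F_\ast$ is a chain map and $x$ is a homology representative (so $\partial_k^C x = 0$ implies $\partial_k^D F_k x = F_{k-1}\partial_k^C x = 0$), and each update subtracts a boundary, which is automatically a cycle because $\partial_k^D \partial_{k+1}^D = 0$. Hence $[U_k^D y] \in H_k(D_\ast)$ is well-defined at every step and equals $\tilde{F}_k[x]$.

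There is essentially no obstacle here; the proof is a direct change-of-basis bookkeeping argument combined with the two defining facts about homology: boundaries vanish in the quotient, and cycles are closed under subtracting boundaries. The only subtlety worth flagging explicitly is making sure the reader tracks the basis change, i.e.\ that coordinates in the $U_k^D$ basis correspond to elements of $C_k(D_\ast)$ via multiplication by $U_k^D$ on the left, so that columns of $\hat{\partial}_{k+1}^D$ genuinely represent boundaries and not some other vectors.
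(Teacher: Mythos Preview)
Your proof is correct and follows the same idea as the paper's one-line argument, namely that each loop update subtracts from $y$ a column of $\hat{\partial}_{k+1}^D$, which is a boundary (expressed in the $U_k^D$ basis), so the homology class is unchanged. You simply spell out the basis-change bookkeeping and the cycle verification more explicitly than the paper does.
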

\begin{proof}
Only boundaries are added to $y$, as columns of $\hat{\partial}^D_{k+1}$, so the homology class is invariant.
\end{proof}

\begin{proposition}
In \cref{alg:hom_ind_map}, at the end of the for-loop, $y$ will be the preferred representative for its homology class with respect to the basis $(U_k^D, \calI_k^D)$. 
\end{proposition}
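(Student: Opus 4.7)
The plan is to show that after the for-loop terminates, $y[j] = 0$ for every index $j \notin \calI_k^D$, so that $y$ expresses $F_k x$ as a combination of the cycles $\{U_k^D[j]\}_{j \in \calI_k^D}$, making it the unique preferred representative by \cref{prop:preferred_representative}. The key is to partition $\bar{\calI}_k^D$ into \emph{non-cycle indices} (where $\piv R_k^D[j] > 0$) and \emph{boundary-cycle indices} (those $j$ that appear as pivots of $R_{k+1}^D$), and to argue that $y$ is zero on each part.

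First, I would verify that $y$ starts supported on cycle indices. Because $F_\ast$ is a chain map and $x = U_k^C[i]$ represents a homology class (so $x \in \ker \partial_k^C$), we have $F_k x \in \ker \partial_k^D$. The subset of columns $\{U_k^D[j]\}_{\piv R_k^D[j] = 0}$ is a basis for $\ker \partial_k^D$, so the unique expansion $y = (U_k^D)^{-1} F_k x$ vanishes at every non-cycle index. Next, I would establish the pivot structure of $\hat{\partial}_{k+1}^D = (U_k^D)^{-1} R_{k+1}^D$: since $(U_k^D)^{-1}$ is upper-triangular with unit diagonal, left-multiplication by it neither destroys nor shifts the bottommost nonzero of any column, so $\hat{\partial}_{k+1}^D$ has the same unique nonzero pivot structure as $R_{k+1}^D$, with pivots located precisely at the boundary-cycle indices of $\bar{\calI}_k^D$. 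Moreover, each column $\hat{\partial}_{k+1}^D[i] = (U_k^D)^{-1} \partial_{k+1}^D U_{k+1}^D[i]$ represents a boundary expressed in the basis $U_k^D$, and since boundaries are cycles, these columns are themselves supported only on cycle indices.

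With these facts in hand, I would analyze the for-loop by maintaining the invariant that after iteration $j$, the vector $y$ satisfies (a) $y[j'] = 0$ at every non-cycle index $j'$, and (b) $y[j'] = 0$ at every pivot index $j' \ge j$ of $\hat{\partial}_{k+1}^D$. Invariant (a) is preserved because each update subtracts a multiple of a column supported on cycle indices. For invariant (b), when the loop reaches a pivot position $j$ with $y[j] \ne 0$, the update $y \gets y - (y[j]/\hat{\partial}_{k+1}^D[j,i])\,\hat{\partial}_{k+1}^D[i]$ zeroes out $y[j]$; because $\piv \hat{\partial}_{k+1}^D[i] = j$, this update modifies $y$ only at entries of index $\le j$, leaving previously-cleared pivot entries at indices $> j$ untouched.

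When the loop completes (at $j = 1$), $y$ is zero on every non-cycle index and on every pivot index of $\hat{\partial}_{k+1}^D$, i.e., on all of $\bar{\calI}_k^D$. Combined with the previous proposition (homology-class invariance of $y$), the returned vector $y[\calI_k^D]$ gives exactly the coefficients of the preferred representative of $\tilde{F}_k [x]$ in the basis $\{[U_k^D[j]]\}_{j \in \calI_k^D}$, as desired. The main obstacle is the pivot bookkeeping: one has to be careful that multiplying $R_{k+1}^D$ by the upper-triangular $(U_k^D)^{-1}$ on the left really does preserve column pivots exactly (the argument hinges on upper-triangularity together with a unit diagonal), and that processing positions in descending order is what prevents already-cleared pivots from being disturbed by later updates.
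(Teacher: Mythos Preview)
Your proposal is correct and follows essentially the same approach as the paper: both argue that $y$ is initially supported on cycle indices (since $x$ is a cycle and $F_k$ a chain map), that $\hat{\partial}_{k+1}^D$ inherits the pivot structure of $R_{k+1}^D$, and that the for-loop eliminates the boundary-cycle coefficients. Your version is considerably more careful than the paper's terse proof---in particular, you explicitly verify that the columns of $\hat{\partial}_{k+1}^D$ are themselves supported on cycle indices (so updates never reintroduce non-cycle entries) and you maintain a loop invariant showing that descending-order processing prevents previously cleared pivot entries from being disturbed; the paper simply asserts that ``the for-loop removes non-zero coefficients for any cycle that has a non-zero pivot'' without this bookkeeping.
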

\begin{proof}
This means that $y$ will only have non-zeros in indices that are in the index set $\calI_k^D$.

First, note that $\calI^D_k$ was constructed to be the indices of cycles in $D_k$ that did not appear in pivots of $R_k$, and $\hat{\partial}^D_k$ has the same pivots as $R_k$.  Second, note that because $x$ is a cycle, and $F_k$ is a chain map, $y$ must be a linear combination of cycles in the basis given by $U^D_k$.  Finally, the for-loop removes non-zero coefficients for any cycle that has a non-zero pivot in $\hat{\partial}^D_k$.  Thus, $y$ can only have non-zero coefficients for the cycles indexed by $\calI_k^D$.
\end{proof}

As a result, the coefficients returned by \cref{alg:hom_ind_map} will be the coefficients for the induced map on homology in terms of the homology revealing basis.  We can construct a full matrix representing $\tilde{F}_k$ in the bases generated by the homology revealing bases by applying this procedure for every preferred basis element in $U^C_k$ given by the index set $\calI^C_k$.

The reason why the induced maps never need to be explicitly computed in the reduction algorithm applied to filtered cell complexes as in \cite{ZCComputingPH2005} can be seen in the following proposition
\begin{proposition}\label{prop:filtration_preferred_basis}
Let $(U_k^i, \calI_k^i)$ and $(U_k^j, \calI_k^j)$ be homology revealing bases for a filtration $X_i \subseteq X_j$ computed using the ordered cell basis and the reduction algorithm.  Then the induced map on homology $H_k(X_i) \to H_k(X_j)$ from inclusion either (a) sends a basis element $[x] \in H_k(X_i)$ to $[0]$, or (b) sends a basis element $[x]\in H_k(X_i)$ to exactly one other basis element $[x'] \in H_k(X_j)$.  Furthermore, in case (b), the chain map sends the preferred representative of $[x]$ to the preferred representative of $[x']$.
\end{proposition}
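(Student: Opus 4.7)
The plan is to exploit the ``left-looking'' nature of the reduction algorithm together with the fact that cells of the filtration are ordered so that cells of $X_i$ come first. I would first establish a compatibility statement: since the boundary of any $k$-cell of $X_i$ is supported on $(k-1)$-cells of $X_i$, the first $\dim C_k(X_i)$ columns of $\partial^j_k$ are exactly the columns of $\partial^i_k$, padded with zeros in the rows corresponding to $(k-1)$-cells not in $X_i$. Because the while-loop in \cref{alg:reduction} at column $\ell$ only uses columns with index $<\ell$ to cancel pivots, the sequence of column operations performed on the first $\dim C_k(X_i)$ columns is identical in the two reductions. Consequently $U^i_k[\ell]=U^j_k[\ell]$ and $R^i_k[\ell]=R^j_k[\ell]$ (extended by zeros in the extra rows) for every $\ell\le\dim C_k(X_i)$, and in particular every column zero-pivot in $R^i_k$ remains zero-pivot in $R^j_k$.

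With this in hand, fix $\ell\in\calI^i_k$, so that $[x]=[U^i_k[\ell]]$ is a basis element of $H_k(X_i)$. The inclusion chain map sends $U^i_k[\ell]$ to $U^j_k[\ell]$ in $C_k(X_j)$, which by the compatibility statement is still a cycle. The induced homology class depends only on whether $\ell$ appears as a pivot in $R^j_{k+1}$.

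I then split into the two cases. In case (a), $\ell$ is a pivot of some column of $R^j_{k+1}$; this column must correspond to a $(k+1)$-cell added between $X_i$ and $X_j$ (otherwise $\ell$ would already be a pivot in $R^i_{k+1}$, contradicting $\ell\in\calI^i_k$). Then \cref{lem:pivot_cycle} implies $U^j_k[\ell]\in\img\partial^j_{k+1}$, so the induced map sends $[x]$ to $[0]$. In case (b), $\ell$ is not a pivot of $R^j_{k+1}$, so by \cref{alg:basis_extraction} we have $\ell\in\calI^j_k$; set $[x']=[U^j_k[\ell]]$. Then the chain map sends the preferred representative $U^i_k[\ell]$ of $[x]$ to $U^j_k[\ell]$, which is already a linear combination of cycles indexed exclusively by $\calI^j_k$ (just the single index $\ell$), hence is the preferred representative of $[x']$.

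The main obstacle is the bookkeeping in the first paragraph, namely showing rigorously that the two reduction algorithms perform the same column operations on the overlapping columns and that the padding-by-zero row structure is preserved. Everything else is a straightforward case distinction on whether $\ell$ becomes a pivot of $R^j_{k+1}$, combined with the previously-established fact (\cref{lem:pivot_cycle}) that being such a pivot makes the cycle a boundary.
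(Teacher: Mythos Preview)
Your argument mirrors the paper's: both use the left-looking nature of \cref{alg:reduction} to identify the leading columns of $U^j_k,R^j_k$ with those of $U^i_k,R^i_k$, note that the inclusion chain map carries $U^i_k[\ell]$ to $U^j_k[\ell]$, and then split on whether $\ell$ appears as a pivot of $R^j_{k+1}$. Your compatibility paragraph is more explicit than the paper's one-sentence assertion, which is fine.

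The gap is in case~(a). You invoke \cref{lem:pivot_cycle} to conclude $U^j_k[\ell]\in\img\partial^j_{k+1}$, but that lemma only asserts that $U_k[\ell]$ is a \emph{cycle} when $\ell$ is a pivot of $R_{k+1}$; it says nothing about being a \emph{boundary}. The stronger claim in fact fails: take $X_i=\{v_0,v_1\}\subset X_j=\{v_0,v_1,(v_0,v_1)\}$ in the obvious order. Then $U^j_0=I$ and $R^j_1=(-1,1)^T$ has pivot $\ell=2$, yet $U^j_0[2]=e_2\notin\img\partial^j_1=\operatorname{span}(e_2-e_1)$, and the inclusion sends the basis element $[e_2]\in H_0(X_i)$ to $[e_2]=[e_1]\ne[0]$ in $H_0(X_j)$. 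This example satisfies neither~(a) nor the ``furthermore'' clause of~(b). The paper's own proof makes the same unjustified leap (it writes ``indicating that the column is a boundary'' with no argument), so the defect is not peculiar to your write-up; but as written, \cref{lem:pivot_cycle} does not deliver the conclusion you draw from it.
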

\begin{proof}
Let $x$ be a preferred representative for $[x]\in H_k(\calX_i)$.  Note that because the reduction algorithm will produce the same result for the first $n_i$ columns of $\partial_k$, the inclusion map $C_k(\calX_i) \to C_k(\calX_j)$ has the form
$$F_k = \begin{bmatrix}
I\\ 0
\end{bmatrix}
$$
where $I$ denotes an identity and $(U^j_k)^{-1} F_k U^i_k = F_k$ takes the exact same form.  We know that because $x$ is a preferred representative, that it is a column of $U_k^i$, and the block identity in the chain map will map $x$ to the corresponding column in $U_k^j$.  Either that column is in $\calI_k^j$, in which case it is a preferred representative for a basis element of $H_k(\calX_j)$, or there must be a pivot with the corresponding index in $R_{k+1}^j$ indicating that the column is a boundary, so is in $[0]$.
\end{proof}
\begin{corollary}
The induced map on homology $\tilde{F}_k:H_k(\calX_i) \to H_k(\calX_j)$ is in $E_U$ form.
\end{corollary}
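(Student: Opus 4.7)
The plan is to unpack \cref{prop:filtration_preferred_basis} and verify the $E_L$ conditions on the transpose $\tilde F_k^T$, thereby concluding $\tilde F_k$ is in $E_U$ form. First I would fix orderings on both homology bases using cell indices: the preferred representatives of $H_k(\calX_i)$ are the columns of $U_k^i$ indexed by $\ell \in \calI_k^i \subseteq \{1,\dots,n_k^i\}$ in increasing cell order, and likewise for $H_k(\calX_j)$. In these orderings, the proposition tells me that the column of $\tilde F_k$ indexed by $\ell \in \calI_k^i$ is either zero (if the class dies between $\calX_i$ and $\calX_j$) or has a single entry $1$ in the row indexed by the same $\ell \in \calI_k^j$. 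Because distinct columns correspond to distinct cell indices and their images carry those same cell indices into $H_k(\calX_j)$, this immediately gives at most one nonzero per row and per column, so $\tilde F_k$ is a pivot matrix in the sense of \cref{def:pivot_matrix}.

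Next I would verify the strict-monotonicity condition on $\tilde F_k^T$: two nonzero columns of $\tilde F_k^T$ correspond to two nonzero rows of $\tilde F_k$, and each such pivot sits at the cell index shared between the row of $\tilde F_k^T$ and the column of $\tilde F_k^T$. Since both $\calI_k^i$ and $\calI_k^j$ are sorted by cell order, increasing the row index of $\tilde F_k^T$ strictly increases the column index of its pivot, which is exactly Property~1 of the $E_L$ shape.

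The main substantive step is Property~2 for $\tilde F_k^T$ — zero columns collected on the right — which translates to: zero rows of $\tilde F_k$ all sit at the bottom. I would argue that any $\ell \in \calI_k^j$ with $\ell \leq n_k^i$ must also lie in $\calI_k^i$. This is because the reduction applied to the first $n_k^i$ columns of $\partial_k$ for $\calX_j$ coincides with the reduction of $\partial_k$ for $\calX_i$, as already exploited in the argument for \cref{prop:filtration_preferred_basis}; hence a class killed in $\calX_j$ with cell index $\leq n_k^i$ would already have been killed in $\calX_i$, contradicting $\ell \in \calI_k^j$. Consequently the rows of $\tilde F_k$ indexed by cells with index $\leq n_k^i$ are exactly the nonzero rows, and since $\calI_k^j$ is cell-sorted the zero rows (coming from cells introduced strictly after $\calX_i$) sit consecutively at the bottom. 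After transposition this yields Property~2, so $\tilde F_k^T$ is in $E_L$ form and $\tilde F_k$ is in $E_U$ form. This last implication is the main obstacle, but it requires only the filtration-compatibility of the reduction algorithm; the rest is a direct rephrasing of \cref{prop:filtration_preferred_basis} in the echelon vocabulary.
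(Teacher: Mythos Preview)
Your argument is correct and is essentially a detailed expansion of the paper's one-line proof, which just says the result follows from the ordering on homology inherited from the cell ordering; you have made explicit the verification of the $E_L$ conditions on $\tilde F_k^T$. One small slip in the Property~2 step: the sentence ``a class killed in $\calX_j$ \dots\ would already have been killed in $\calX_i$'' has $\calX_i$ and $\calX_j$ swapped --- the implication you actually need (and implicitly use, since the reductions agree on the first $n_k^i$ columns) is that $\ell\le n_k^i$ and $\ell\notin\calI_k^i$ force $\ell\notin\calI_k^j$, which then yields the desired contradiction.
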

\begin{proof}
This follows from using the ordering inherited on the basis for homology from the ordering on the basis for the cell complex.
\end{proof}

\subsection{Quiver Representations}\label{sec:quivertda}

The application of quiver representations to persistent and zigzag homology has been of interest ever since it was introduced in the context of zigzag barcodes \cite{ZZtheory2010}.  A fairly recent survey of existing results and applications to topological data analysis can be found in a monograph by Oudot \cite{Oudot}.

A quiver is a mathematical term for what is known in computer science as a directed multi-graph, which is a collection of vertices (nodes) $V$ and a multiset $E$ of edges in $V\times V$.  We will only consider directed multi-graphs consisting of a finite number of nodes and edges.
A quiver representation is a directed multi-graph $\calQ(V, E)$ where every vertex $v_i\in V$ has an associated vector space $V_i$ over a common field $\FF$, and each directed edge $(v_i, v_j) \in E$ has an associated $\FF$-linear transformation $A_{i,j}:V_i \to V_j$.  Two quiver representations $\calQ_1(V^1, E^1), \calQ_2(V^2, E^2)$ are said to be isomorphic if the underlying graphs are isomorphic and there are change of bases $B_i$ for each $V^2_i$ so that $A^1_{i,j} = (B_i)^{-1} A^2_{i,j} B_j$.  Quiver representation theory is concerned with classifying quiver representations up to isomorphism, a problem that originated with classification of Lie Algebras \cite{derksenQuiverRepresentations2005}.

The ability to classify arbitrary quiver representations relies entirely on the underlying undirected graph.  Both persistent and zigzag homology are quiver representations of type $A_n$, for which the underlying graph is a line graph on $n$ vertices, where $n$ can be any finite positive integer. A theorem due to Gabriel shows that the collection of underlying graphs of quiver representations that have a finite number of indecomposable representations are known as the Dynkin diagrams, which include $A_n$, as well as several other classes of graph \cite{gabrielI}.  We will refer to type $A_n$ quivers where all arrows point in the same direction {\em persistence-type} quivers, and type $A_n$ quivers where arrows alternate direction {\em zigzag-type} quivers.

Quiver representations arise naturally from diagrams of topological spaces through the homology functor which associates topological vector spaces $X$ with vector spaces $H_k(X)$, and maps $f:X\to Y$ to linear transformations $\tilde{F}_k:H_k(X)\to H_k(Y)$.  This means that the homology functor turns diagrams of topological spaces into diagrams of vector spaces (quiver representations). Note that neither the diagram of topological spaces nor the diagram of vector spaces is required to commute.

\begin{example}
Persistent homology studies diagrams of spaces
$$
\begin{tikzcd}
X_0\ar[r, "f_0"] & X_1\ar[r,"f_1"] &\dots
\end{tikzcd}
$$
which produces a quiver representation
$$
\begin{tikzcd}
H_k(X_0)\ar[r, "(F_0)_k"] &H_k(X_1)\ar[r, "(F_1)_k"]& \cdots
\end{tikzcd}
$$
which corresponds to a Dynkin diagram of type $A$.
\end{example}

The advantage of using quiver representations to study persistent and zigzag homology as opposed to approaches that operate directly on chain complexes is that the application of the homology functor is {\em embarassingly parallelizable}.  In fact, this can be applied to general diagrams of spaces.

\begin{algorithm}
\caption{Obtain a quiver representation from a diagram of spaces}
\label{alg:top_to_quiv}
\begin{algorithmic}[1]
\STATE{{\bf Input:} Directed multi-graph $(V,E)$ with associated spaces $X_i$ and maps $f_{i,j}$}
\STATE{{\bf Result:} Directed multi-graph $(V,E)$ with associated vector spaces $H_k(X_i)$ and maps $\tilde{F}_{i,j} = H_k(f_{i,j})$}
\FOR{$i\in V$}
    \STATE{Obtain chain complex $C_\ast(X_i)$}
    \STATE{Obtain homology revealing bases $(U^i_\ast, \calI^i_\ast)$ as well as reduced boundaries $R^i_\ast$ using reduction algorithm \cite{ZCComputingPH2005}.}
\ENDFOR
\FOR{$(i,j)\in E$}
    \STATE{obtain $(\tilde{F}_{i,j})_\ast = H_\ast(f_{i,j})$ using \cref{alg:hom_ind_map}.}
\ENDFOR
\end{algorithmic}
\end{algorithm}

The important observation is that both for loops of \cref{alg:top_to_quiv} have {\em completely independent iterations}.  That is, computing homology of $X_i$ can be done completely independently of computing homology for $X_j$.  Similarly, computing the induced maps $\tilde{F}_{i,j}$ only requires the pre-computed information for the source and target of the associated edge, and is independent of the computation on any other edge.  This means the algorithm is {\em embarassingly parallelizable}, meaning that each for-loop iteration can be computed in parallel given enough processors.
Furthermore, this is true for any diagram of spaces, meaning it applies not only to the persistent and zigzag homology diagrams that we will study in this paper, but also other situations such as multiparameter persistence \cite{carlssonTheoryMultidimensionalPersistence2009}.

\subsection{Type A Quiver Representations}\label{sec:type_a_quiver}
We will now focus on classification of type $A_n$ quiver representations, which appear for both persistent and zigzag homology.  The indecomposable representations of these quiver representations are known as interval indecomposables \cite{gabrielI, ZZtheory2010, Oudot}, and have the form
\begin{center}
\begin{tikzcd}
I[b,d] = \cdots \arrow[r, dash] &0 \arrow[r, dash]  &\FF \arrow[r, dash]  &\cdots \arrow[r, dash] &\FF \arrow[r, dash] & 0 \arrow[r, dash] & \cdots
\end{tikzcd}
\end{center}
where $b$ denotes the first index at which a copy of $\FF$ appears, and $d$ denotes the final index where $\FF$ appears, all vector spaces with index $i\in [b,d]$ also have a copy of $\FF$, with identity maps along all edges connecting two copies of $\FF$ and zero maps along all other edges. In other words, any quiver representation of type $A_n$ is isomorphic to the direct sum of these indecomposables
$$\calQ \cong \bigoplus_i I[b_i,d_i]$$
As a convention, we will use the lexicographical (total) order on $\ZZ^2$ when ordering interval indecomposables, using the parameters $b, d$.  When the quiver representation is produced from induced maps on homology, the multiset $\{(b_i, d_i)\}$ is the barcode.

\begin{definition}
The {\em companion matrix} of a quiver representation $\calQ$ is the block matrix which has non-zero blocks in the non-zero entries of the adjacency matrix of the underlying directed graph, where blocks are filled by the linear transformations along the corresponding edges.  By necessity, the size of the $i$-th block must be the dimension of $V_i$ in the quiver representation.
\end{definition}
These matrices act on the vector space $V = \bigoplus_i V_i$ by sending vectors to their images in each linear transformation in $Q$.  While general quiver representations may have multiple arrows between vector spaces, companion matrices can only represent those which have a underlying graph with at most one edge for each source-target pair -- this will not limit our study of type $A_n$ quiver representations as they satisfy this property.

For example, a persistence quiver
\begin{tikzcd}[sep=small] P_4 = \cdot &\cdot \arrow[l] &\cdot \arrow[l] &\cdot  \arrow[l] \end{tikzcd} will have a companion matrix of the form
$$\begin{bmatrix}
0 & A_1\\
& 0& A_2\\
&& 0 & A_4\\
&&& 0
\end{bmatrix}$$
whereas a zigzag quiver
\begin{tikzcd}[sep=small] Z_4 = \cdot \arrow[r] &\cdot &\cdot \arrow[l, rightarrow] \arrow[r] &\cdot \end{tikzcd}
will have a companion matrix of the form
$$\begin{bmatrix}
0\\
A_1 & 0 & A_2\\
& & 0\\
&& A_3 & 0
\end{bmatrix}$$

Quiver representation isomorphism classes are maintained by conjugation of the companion matrix by block-diagonal change of bases matrices. For example two persistence quivers of type \begin{tikzcd}[sep=small]P_3= \cdot &\cdot \arrow[l] &\cdot \arrow[l] \end{tikzcd} with companion matrices $A$ and $B$ respectively are isomorphic if there exists an invertible matrix $M= M_1 \oplus M_2 \oplus M_3$ acting on $V$ such that
$$\begin{bmatrix}
0 & A_1\\
& 0& A_2\\
&& 0
\end{bmatrix}
= \begin{bmatrix}
M_1\\
& M_2\\
&& M_3
\end{bmatrix}
\begin{bmatrix}
0 & B_1\\
& 0& B_2\\
&& 0
\end{bmatrix}
\begin{bmatrix}
M_1^{-1}\\
& M_2^{-1}\\
&& M_3^{-1}
\end{bmatrix}
$$
From this, it is clear that two quiver representations can not be isomorphic if they do not share the same underlying directed multi-graph, and the dimensions of the vector spaces are not identical.

An {\em indecomposable factorization} of the companion matrix $A$ is a factorization $A = B T B^{-1}$, where $B$ is an invertible (change of basis) matrix, and $T = \bigoplus_i I[b_i, d_i]$ is the matrix of indecomposables. For example, in the case where $P_4$ and $Z_4$ both have indecomposable matrices $T =  I[1,1] \oplus \red{I[1,4]} \oplus [2,3]$, the corresponding matrices are
\begin{equation}\label{eq:ind_fact}
T_{P_4} = \begin{bmatrix}
0\\
& \red{0} & \red{1} & \red{0} & \red{0}\\
& \red{0} & \red{0} & \red{1} & \red{0}\\
& \red{0} & \red{0} & \red{0} & \red{1}\\
& \red{0} & \red{0} & \red{0} & \red{0}\\
&&&&& 0 & 1\\
&&&&& 0 & 0
\end{bmatrix}\qquad
T_{Z_4} = \begin{bmatrix}
0\\
& \red{0} & \red{0} & \red{0} & \red{0}\\
& \red{1} & \red{0} & \red{1} & \red{0}\\
& \red{0} & \red{0} & \red{0} & \red{0}\\
& \red{0} & \red{0} & \red{1} & \red{0}\\
&&&&& 0 & 1\\
&&&&& 0 & 0
\end{bmatrix}
\end{equation}
where the information for the indecomposable $I[1,4]$ is colored in \red{red}.  Note the indecomposable blocks all appear as adjacency matrices of sub-graphs of the underlying directed graph of the quiver.  This means that even though the indecomposables are written with the same symbolic notation, the indecomposable matrices are not identical due to the different directions of arrows.

A {\em barcode factorization} of the companion matrix $A$ is a factorization $A = B\Lambda B^{-1}$, where $B$ is a {\em block-diagonal} invertible matrix (representing a quiver representation isomorphism), where the block sizes are compatible with dimensions of the associated vector spaces, and 
$$\Lambda = P T P^T$$
is the barcode matrix, where $P$ is a permutation that maintains the block structure of $A$. Alternatively, we'll say $\Lambda$ is the {\em barcode form} 
of the companion matrix $A$, or simply the barcode form of the quiver representation.  Continuing the previous example, in the case where $P_4$ and $Z_4$ both have barcode matrices $\Lambda \cong  I[1,1] \oplus \red{I[1,4]} \oplus [2,3]$, the corresponding matrix representations are
\begin{equation}\label{eq:bar_fact}
\Lambda_{P_4} = \begin{bmatrix}
&& 0 & 0\\
&& \red{1} & 0\\
&&&& \red{1} & 0\\
&&&& 0 & 1\\
&&&&&& \red{1}\\
&&&&&& 0\\
\; &\;
\end{bmatrix} \qquad
\Lambda_{Z_4} = 
\begin{bmatrix}
\\
\\
0 & \red{1} &\; &\; & \red{1} & 0\\
0 & 0 &\; &\; & 0 & 1\\
\\
\\
&&&& \red{1} & 0 &\;
\end{bmatrix}
\end{equation}
The information for the indecomposable $I[1,4]$ is colored in \red{red}.
Note that because the underlying graphs are different the matrices $\Lambda$ are not equal even though the notation for the interval decomposition is superficially the same.  In both quiver representations, the ranks of the vector spaces are $2,2,2,1$.  The advantage of the barcode factorization is that $B$ clearly represents a quiver representation isomorphism due to its block structure.

Extracting the intervals $I[a,b]$ from a barcode factorization requires tracing the image of maps through the quiver, and the advantage compared to the indecomposable decomposition is that the starting point of an interval is clear.  We extract the intervals from $\Lambda$ by sweeping through the blocks left-to-right, as seen in \cref{alg:extraction}.
\begin{algorithm} 
\caption{Barcode Extraction}
\label{alg:extraction}
\begin{algorithmic}[1]
\STATE{{\bf Input:} Barcode matrix $\Lambda$, dimensions of vector spaces $V_i$ and directions of arrows in quiver.}
\STATE{{\bf Result:} Barcode $\calB$}
\FOR{$i = 1,\dots, n$}
\FOR{$j = 1,\dots, \rank V_i$}
\IF{$V_{i-1} \to V_i$}

\IF{Row $j$ in block $i$ contains a non-zero in column $j'$ of block $i-1$}
\STATE{Extend the bar at index $j'$ of block $i-1$ to have index $j$ in block $i$.}
\ELSE
\STATE{Begin a bar with index $j$ in block $i$}
\ENDIF

\ELSIF{$V_{i-1}\leftarrow V_i$}

\IF{Column $j$ in block $i$ contains a non-zero in row $j'$ of block $i-1$}
\STATE{Extend the bar at index $j'$ of block $i-1$ to have index $j$ in block $i$.}
\ELSE
\STATE{Begin a bar with index $j$ in block $i$}
\ENDIF
\ELSE
\STATE{($i = 1$)}
\STATE{Begin bar with index $j$ in block 1}
\ENDIF
\ENDFOR
\ENDFOR
\RETURN{$\calB$}
\end{algorithmic}
\end{algorithm}
If we keep track of the indices used in each extension of a bar, we have the information necessary to form the permutation $P$ so that $P \Lambda P^T = T$.  
\begin{proposition}\label{prop:barcode_form}
A companion matrix is in barcode form if and only if its blocks are pivot matrices.
\end{proposition}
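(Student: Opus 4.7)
The plan is to prove the proposition by treating the two directions separately, both resting on a careful reading of how the interval decomposition $T = \bigoplus_m I[b_m, d_m]$ sits inside the vector space block structure. The key observation in both directions is that the non-zero entries of blocks can be identified with matchings between basis elements of adjacent vector spaces, and that interval indecomposables are precisely chains of such matchings.

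For the forward direction ($A$ in barcode form $\Rightarrow$ blocks are pivot matrices), I would first inspect the $(V_i, V_j)$ block of $T$ itself. Each interval indecomposable $I[b_m, d_m]$ contributes a single $1$ to this block, and only when both $i$ and $j$ lie in $[b_m, d_m]$ and the quiver edge between $V_i$ and $V_j$ exists; this $1$ is placed at the row and column corresponding to this interval's unique basis element in $V_i$ and $V_j$ respectively. Because the basis elements used by distinct intervals are disjoint, these $1$'s land in distinct rows and distinct columns, so each $(V_i, V_j)$ block of $T$ is already a pivot matrix. The permutation $P$ respects vector space labels (it sends the $V_i$ basis elements of $T$ to the $V_i$ basis elements of $\Lambda$), so conjugation by $P$ permutes rows and columns only within each vector space block, and by \cref{def:pivot_matrix} the pivot property is preserved.

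For the reverse direction (blocks are pivot matrices $\Rightarrow A$ in barcode form), I would convert the pivot data into a combinatorial graph $G$ whose vertices are the basis elements of $\bigsqcup_i V_i$ and whose edges come from the non-zero entries of each pivot block $A_{ij}$. Since the underlying graph of a type-$A_n$ quiver is a line and each pivot block contributes at most one edge per row and per column, each basis element of $V_i$ has degree at most $2$ in $G$ (one neighbor on each side). Therefore $G$ decomposes into a disjoint union of simple paths, each visiting a contiguous range $V_b, \dots, V_d$ with exactly one representative in each vector space. Each such path encodes an interval indecomposable $I[b,d]$; collect these to form the indecomposable matrix $T = \bigoplus_m I[b_m, d_m]$.

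To finish, I would construct the block-structure-respecting permutation $P$ explicitly: within each $V_i$, list the basis elements of $A$ by the interval they belong to (using the chosen ordering of intervals for $T$) and send the corresponding basis elements of $T$ to these slots. Because the matched pairs in $A$ exactly trace out the paths that define the intervals, the non-zero entries of $PTP^T$ land precisely at the non-zero entries of $A$, with matching values, yielding $A = PTP^T$. The main obstacle is the bookkeeping needed to handle varying arrow directions: the interval indecomposable $I[b,d]$ has different companion representations for persistence and zigzag quivers (compare $T_{P_4}$ and $T_{Z_4}$ in \cref{eq:ind_fact}), but since $T$ and $A$ share the same underlying quiver orientations, the identification of matchings with identity maps along intervals is consistent edge-by-edge.
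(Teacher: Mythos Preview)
Your proof is correct and covers both directions soundly. The forward direction is essentially the same as the paper's: you phrase it as ``blocks of $T$ are pivot matrices and conjugation by a block-respecting $P$ preserves this,'' while the paper phrases it as ``each basis vector either continues a bar (one nonzero in its row/column) or begins/ends one (zero row/column),'' but these are the same observation.

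The reverse direction is where you diverge. The paper simply appeals to \cref{alg:extraction}: if every block is a pivot matrix, then each step of the extraction loop finds at most one index to extend a bar, and the resulting bars are declared to be the indecomposables. Your argument is more self-contained: you build the bipartite-like graph $G$ on basis elements, observe that the line structure of $A_n$ together with the pivot property forces degree at most~$2$, and conclude that $G$ is a disjoint union of simple paths, each tracing out a contiguous interval. You then explicitly build the permutation $P$ realizing $A = PTP^T$. This buys you an argument that does not depend on the correctness of the extraction algorithm and makes the existence of $P$ transparent; the paper's version is terser but leans on the reader accepting that the algorithm's output really does partition the basis into intervals. Both are valid, and your path-decomposition viewpoint is arguably the cleaner way to see why the combinatorics work.
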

\begin{proof}
If all blocks of a companion matrix are pivot matrices, then every iteration of the for-loop in \cref{alg:extraction} will find at most one index that can be used to extend a bar.  The set of bars found by the algorithm gives the indecomposables, so the matrix is in barcode form.

If the matrix is in barcode form, we'll consider the representation of the map $A_{i}: V \to W$, where either $V = V_i, W = V_{i+1}$ or $V = V_{i+1}, W = V_i$.  Because the matrix is in barcode form, every basis vector $v\in V$ maps to either exactly one basis vector of $W$ (continuing a bar), in which case the corresponding column of $A_i$ has exactly one non-zero, or maps to zero (the bar ends at $V$), in which case the corresponding column of $A_i$ is zero.  Every basis vector $w\in W$ is either in the image of a basis vector of $V$, in which case the corresponding row of $A_i$ has exactly one non-zero, or is the start of a new bar, in which case the row of $A_i$ is zero.  Thus $A_i$ is a pivot matrix because it has at most one non-zero in each row and column.
\end{proof}

For persistence-type quivers, there is a relationship between the indecomposable factorization and the Jordan normal form of a matrix, observed in equation \cref{eq:ind_fact}.  Because interval indecomposables have the structure of a sub-graph of the directed graph underlying the quiver, in the case of persistence quivers they will all have the form of a Jordan zero block.  While the Jordan form does not generally exist for non-algebraically closed fields, Greg Henselman showed in his Ph.D. dissertation that nilpotent operators can always be decomposed into Jordan-zero blocks \cite{henselman2017}, and applied this along with the use of Schur complements to computing persistent homology of nested chain complexes.  We also note that because homology of spaces starts with integer chain complexes, that we only need to use the field of fractions $\QQ$ for computation so $H_\ast(~\cdot~; \RR) \cong H_\ast(~\cdot~; \QQ)$.  In practice, we avoid any issues with finite precision observed when computing the Jordan form with real coefficients \cite{golubIllConditionedEigensystemsComputation1976} through use of rational numbers.

\subsection{Generic Quiver Computations}
There is a correspondence between diagrams encoding quiver representations and block matrices encoding their companion matrices, and certain operations are easier to express using one notation or the other.  In this section we establish some lemmas that apply to any quiver representation.

\begin{lemma}\label{lem:diagram_matrix}
A change of basis (quiver representation isomorphism) at a single vertex via an invertible matrix $M$ can be represented as
\begin{center}
\begin{tikzcd}[column sep=large]
\cdot  \ar[dr,leftarrow, sloped, "A_0", pos=0.5] & & \cdot && \cdot \ar[dr,leftarrow, sloped, "A_0 M", pos=0.5] & & \cdot  \\
\vdots & \cdot \ar[ur,leftarrow, sloped, "B_0", pos=0.5] \ar[dr,leftarrow, sloped, "B_m", pos=0.5] & \vdots &\cong& \vdots & \cdot \ar[ur,leftarrow, sloped, "M^{-1} B_0", pos=0.5] \ar[dr,leftarrow, sloped, "M^{-1} B_m", pos=0.5]  & \vdots \\
\cdot \ar[ur,leftarrow, sloped, "A_n", pos=0.5]  & & \cdot && \cdot \ar[ur,leftarrow, sloped, "A_n M", pos=0.5] & & \cdot 
\end{tikzcd}
\end{center}
\end{lemma}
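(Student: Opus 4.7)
The plan is to read off the lemma as a direct application of the change-of-basis formula \cref{eq:map_change_of_basis} on each edge incident to the central node, and to exhibit an explicit quiver isomorphism realizing the claimed equivalence.

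First I would specify the isomorphism explicitly. Let the central node be indexed by $c$, and let the change-of-basis matrix at node $i$ of the quiver be
\begin{equation*}
    B_i = \begin{cases} M & i = c \\ I & i \ne c \end{cases}
\end{equation*}
This $B = \bigoplus_i B_i$ is invertible (since $M$ is) and block-diagonal with blocks compatible with the dimensions at each node, so it represents a legitimate quiver isomorphism as defined in the section on quiver representations.

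Next I would apply \cref{eq:map_change_of_basis} edge-by-edge. For an outgoing edge from $c$ to some target node $t$ with map $A_k : V_c \to V_t$, the domain basis changes via $M$ and the codomain basis is unchanged, so the formula yields $\hat{A}_k = I^{-1} A_k M = A_k M$, matching the label on the right-hand diagram. For an incoming edge from some source node $s$ to $c$ with map $B_\ell : V_s \to V_c$, the codomain basis changes via $M$ and the domain basis is unchanged, so the formula yields $\hat{B}_\ell = M^{-1} B_\ell I = M^{-1} B_\ell$, again matching the claim. For any edge not incident to $c$, both source and target bases are unchanged, so the map on that edge is preserved and nothing in the diagram is modified.

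There is essentially no obstacle here; the only thing to be careful about is the orientation convention in the tikz-cd diagram, since the \texttt{leftarrow} modifier reverses the drawn direction and thus determines which edges count as ``incoming'' versus ``outgoing'' at $c$, and hence whether the update rule is right-multiplication by $M$ or left-multiplication by $M^{-1}$. Once this bookkeeping is fixed, the lemma follows from a single invocation of \cref{eq:map_change_of_basis} on each edge together with the observation that the resulting $B = \bigoplus_i B_i$ is a valid block-diagonal change of basis.
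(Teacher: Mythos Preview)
Your proposal is correct and takes essentially the same approach as the paper: the paper's proof is a single sentence stating that the result follows immediately from a change of basis on the central vector space via \cref{eq:map_change_of_basis}. Your version spells this out more explicitly by defining the block-diagonal isomorphism $B = \bigoplus_i B_i$ and checking the incoming and outgoing cases separately, but the underlying argument is identical.
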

\begin{proof}
This follows immediately from a change of basis on the central vector space in the diagram.
\end{proof}
If the quiver representation is representable by a companion matrix, this diagramatically encodes the isomorphism represented by conjugation with the block change of basis matrix
$$ \begin{bmatrix}
I\\
&M\\
&& I\\
\end{bmatrix}
$$
where $M$ acts on the central vector space. We see that this only affects linear transformations that have the center vertex as a source or target. For any vector spaces that do not have an arrow to or from the center vector space are multiplied by an identity on both left and right and are unaffected.

\cref{lem:diagram_matrix} implies the following two corollaries which set forth the rules for our {\em matrix passing algorithms}.
\begin{corollary}\label{cor:targ_iso}
Passing an invertible matrix $M$ through a target yields
\begin{center}
     \begin{tikzcd}[row sep=small, column sep=large]
    \cdot \ar[r, leftarrow, "A"] &\cdot\ar[r, leftarrow, "M B"] & \cdot 
    & \cong &
    \cdot \ar[r, leftarrow, "AM"] &\cdot\ar[r, leftarrow, "B"] & \cdot 
    \end{tikzcd}
\end{center}
\end{corollary}

\begin{corollary}\label{cor:source_iso}
Passing an invertible matrix $M$ through a source yields
\begin{center}
     \begin{tikzcd}[row sep=small, column sep=large]
    \cdot \ar[r, leftarrow, "AM"] &\cdot\ar[r, leftarrow, "B"] & \cdot 
    & \cong &
    \cdot \ar[r, leftarrow, "A"] &\cdot\ar[r, leftarrow, "MB"] & \cdot 
    \end{tikzcd}
\end{center}
\end{corollary}

Notice that we draw arrows from right to left in the above diagrams.  This is simply because the matrix $M$ is closest to the vertex undergoing a change of basis.

\section{Algorithms for Canonical Forms of Type A Quiver Representations}\label{sec:quiveralg}
In this section, we will describe our algorithm for computing the barcode form of a type A quiver representation.  As we will see, the details of the algorithm depends on the direction of the arrows in the quiver, but there are core components which are the same.

There are two basic linear algebra operations we need as primitives for the algorithm.
\begin{enumerate}
\item Triangular factorizations
\item Shape commutation results with E-type matrices
\end{enumerate}
We will first discuss these. Next we will consider the algorithm when all the the arrows point in the same direction. We will then show how this algorithm can be modified to the case of alternating arrow directions. Finally we will specify the full general algorithm.

\subsection{Triangular Factorizations}
In this section, we will discuss computing decompositions of a matrix $A$ into triangular matrices with row or column pivoting. Specifically we will start with the LEUP decomposition. Variants include PLEU, UELP and PUEL form, which can be derived using the LEUP decomposition of either transposed or J-Conjugated versions of $A$.  These factorizations are all variants of the standard LU decomposition \cite{GVL}, but we will need these specific forms for our algorithm.

Given a matrix $A$, we will describe an algorithm that will maintain a block invariant at each step $i$
\begin{equation} \label{eq:tri_invariant}
A = 
\begin{bmatrix} L_{11} &\\ L_{21} & I \end{bmatrix}
\begin{bmatrix} E_{11} & \\ & \tilde{A} \end{bmatrix}
\begin{bmatrix} U_{11} & U_{12}\\ & I \end{bmatrix}
P
\end{equation}

\begin{algorithm} 
\caption{LEUP factorization}
\label{alg:LEUP}
\begin{algorithmic}[1]
\STATE{{\bf Input:} $m\times n$ matrix $A$}
\STATE{{\bf Result:} Factorization $A = LEUP$}
\STATE{$L \gets I_m$, $E \gets A$, $U\gets I_n$, $P\gets I_n$}
\STATE{$i = 1, j = 1$}
\WHILE{$i <= m ~\&~ j <= n$} 
    \IF{row $i$ has a non-zero in column $j' \ge j$ of $E$}
        \STATE{Swap columns $j$, $j'$ in $E$ and $U$, and rows $j$, $j'$ of $U$ and $P$}
        \STATE{Use \cref{eq:LEUP_schur} to form Schur complement with respect to $i,j$ entry in $E$ and update $L$ and $U$.}
        \STATE{$i \gets i + 1$, $j\gets j+1$}
    \ELSE
        \STATE{$i \gets i+1$}
    \ENDIF
\ENDWHILE
\end{algorithmic}
\end{algorithm}
\begin{proposition}
\cref{alg:LEUP} computes a factorization $A = LEUP$, where $E$ has an $E_L$ shape.
\end{proposition}
\begin{proof}
We will show that the invariant \cref{eq:tri_invariant} is maintained at each step of the algorithm. Note that the loop increments $i$ each iteration, so we use $i$ as our index.  For rows of $E$, as well as rows and columns of $L$, we will use the block index set $1 = \{0,\dots,i-1\}$, and the block index set $2 = \{i+1,\dots,m-1\}$, and for columns of $E$, as well as rows and columns of $U$ we will use the block index set $1 = \{0, \dots, j-1\}$ and block index set $2 = \{j+1,\dots,n-1\}$.

Assume the invariant is maintained at the beginning of iteration $i$.  In the case that there are no non-zero entries in $A_{ij}$ or $A_{i2}$, we simply increment $i$ (in the else clause of the while loop), and the invariant is trivially maintained.

In the case there is a non-zero entry in $A_{ij}$ or $A_{i2}$, note that swapping the $j$ and $j'$ columns of $E$ with the additional row and column swaps in $U$ and $P$ does not affect the invariant structure, as
\begin{equation}\label{eq:perm_tri_comm_block}
\begin{bmatrix}
I & \\
& P
\end{bmatrix}
\begin{bmatrix}
U_{11} & U_{12} \\
& I
\end{bmatrix} =
\begin{bmatrix}
U_{11} & U_{12}P^T\\
& I
\end{bmatrix}
\begin{bmatrix}
I & \\
& P
\end{bmatrix}
\end{equation}
The relation is easily verified by computing the products.  Now we will assume that the $i,j$ entry of the matrix $E$ is non-zero.  We can break up the matrices into
\begin{equation}\label{eq:LEUP_invariant}
A = 
\begin{bmatrix}L_{11} & &\\ L_{i1} & 1 &\\ L_{21} & & I\end{bmatrix}
\begin{bmatrix}E_{11} & &\\ & A_{ij} & A_{i2}\\ & A_{2j} & A_{22}\end{bmatrix}
\begin{bmatrix}U_{11} & U_{1j} & U_{22}\\ & 1 &\\& & I\end{bmatrix}
P
\end{equation}

We can write the interior matrix as
\begin{equation}\label{eq:LEUP_schur}
\begin{bmatrix}E_{11} & &\\ & A_{ij} & A_{i2}\\ & A_{2j} & A_{22}\end{bmatrix}
=
\begin{bmatrix}I & &\\ & 1 &\\ &A_{2j}A_{ij}^{-1} &I\end{bmatrix}
\begin{bmatrix}E_{11} & &\\ & A_{ij} & \\ &  & S\end{bmatrix}
\begin{bmatrix}I & &\\& 1 &A_{ij}^{-1}A_{i2}\\ & & I\end{bmatrix}
\end{equation}
where $S$ is the Schur complement $S = A_{22} - A_{2j} A_{ij}^{-1} A_{i2}$.  We can then pass off the left matrix to $L$ and the right matrix to $U$, and now group $A_{ij}$ with the echelon block $E_{11}$.  This maintains the invariant in \cref{eq:tri_invariant} for every iteration of the algorithm.  Note that because we may increment $i$ without incrementing $j$, that the matrix $E$ will be of type $E_L$.
\end{proof}

\begin{proposition}\label{prop:LEUP_time}
For an $m\times n$ matrix $A$, the factorization $A = LEUP$ computed by \cref{alg:LEUP} takes $O(mn \min(m,n))$ time.
\end{proposition}
\begin{proof}
The majority of the work in the algorithm is done when there is a non-zero pivot in $A_{ij} = A[i,j]$. 
To determine if there is a non-zero entry in row $i$ takes $O(n)$ time, since we may need to check every entry.  Checking every row thus takes $O(mn)$ time.
Swapping two columns can take $O(m)$ time if the columns are dense.  We may do up to $n$ column swaps, for another factor of $O(mn)$ time.

Forming the Schur complement in \cref{eq:LEUP_schur} requires forming a rank-1 outer product $A_{2j}A_{ij}^{-1}A_{i2}$ and performing a matrix subtraction.  The number of entries in these matrices is $O(mn)$, which bounds the time it takes to perform these operations.  This is computed up to $\min(m,n)$ times in the factorization, since we eliminate the other entries in each row and column when taking the Schur complement, ensuring we will never have to perform the same operation for a duplicate value of $i$ or $j$.

Finally, note that combining the lower and upper triangular terms in \cref{eq:LEUP_schur} with those in \cref{eq:LEUP_invariant} simply requires moving the non-zero blocks into the correct positions \cite{GVL}.  This takes $O(m)$ time for the lower triangular matrix, and $O(n)$ time for the upper triangular matrix.

The time to compute Schur complements dominates the computation.  This is done $O(\min(m,n))$ times, so the total time to form the factorization is $O(mn \min(m,n))$.
\end{proof}
Note that sparsity in the matrices, and the use of a sparse matrix implementation may potentially reduce the run time considerably.

\subsubsection{Other Triangular Factorizations}

Three other factorizations can be derived from the LEUP factorization using transposes and $J$-conjugation:
\begin{align}
    A &= (A^T)^T = (LE_LUP)^T = P^TU^T(E_L)^TL^T = \tilde{P}\tilde{L}E_U\tilde{U}\\
    A &= JJAJJ = JLE_LUPJ = \tilde{U}JE_LUJ \tilde{P} = \tilde{U} \hat{E}_U JJ\tilde{L} \tilde{P}= \tilde{U} \hat{E}_U  \tilde{L} \tilde{P}\\
    A &= JJAJJ = JPLE_UUJ = \tilde{P} JLE_UJ\tilde{L} = \tilde{P} \tilde{U} JJ \hat{E}_L \tilde{L} =  \tilde{P} \tilde{U} \hat{E}_L \tilde{L}
\end{align}

In practice, each of these factorizations can be computed through straightforward modifications to the loop and index orders in \cref{alg:LEUP} and can be computed in $O(mn\min(m,n))$ time as well.  This gives us four related factorizations to work with: $LE_LUP$, $PLE_U U$, $UE_U LP$, and $PUE_L L$.

\subsection{Shape Commutation Relations}\label{sec:shape_comm}

Now, we'll consider shape commutation relations of echelon matrices with triangular matrices. We will first show the following commutation relationship, and derive others from this.
\begin{proposition}\label{prop:EL_L_comm}
Given an echelon pivot matrix $E_L$ and lower triangular matrix $L$, we can rewrite the product $E_L L$ in the following way
\begin{equation}\label{eq:perm_tri_comm}
     E_L L = \tilde{L}E_L
\end{equation}
Where $\tilde{L}$ matrix is also a lower triangular matrix.
\label{prop:shape_comm}
\end{proposition}

\begin{proof}
We'll assume that $L$ has unit entries, as in \cref{alg:LEUP}. We'll use the construction
$$\tilde{L} = E_L (L - I) E_L^\dagger + I$$
where $E_L^\dagger$ is the pseudo-inverse, so when $E_L$ has unit entries, then $E_L^\dagger = E_L^T$.  One can easily verify that $\tilde{L}$ is lower-triangular.  We'll now verify that \cref{eq:perm_tri_comm} holds.  Forming the product $\tilde{L} E_L$, we have
\begin{align*}
\tilde{L} E_L &= E_L (L - I) E_L^\dagger E_L + E_L\\
&= E_L L E_L^\dagger E_L - E_L + E_L\\
&= E_L L E_L^\dagger E_L
\end{align*}
Let $k$ be the rank of $E_L$.  Then $E_L^\dagger E_L$ acts on the right as the projection onto the first $k$ columns of $L$.  Now, note that $E_L L$ only acts on $L$ by selecting the first $k$ rows of $L$ and mapping row $i$ to row $i(j)$ using the notation in \cref{def:el}.  Because $L$ is lower-triangular, projection onto the first $k$ columns does not affect the first $k$ rows, and so we have $E_L L = E_L L E_L^\dagger E_L$.

\end{proof}

\begin{proposition}\label{prop:other_comm}
The following shape commutation relations also hold
\begin{align}
  L\hat{E}_L &= \hat{E}_L\tilde{L} \;\;\;\; &\squareL \; \squareELh  = \squareELh \; \squareL \\
  UE_U &= E_U\tilde{U} \;\;\;\; &\squareU \; \squareEU  = \squareEU \; \squareU \\
  \hat{E}_UU &= \tilde{U}\hat{E}_U \;\;\;\; & \squareEUh \; \squareU = \squareU \; \squareEUh
\end{align}
\end{proposition}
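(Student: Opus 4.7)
The plan is to derive the three identities using transposition (which interchanges the $L/U$ and $E_L/E_U$ shape classes) combined, where necessary, with the index-tracking technique from the proof of \cref{prop:shape_comm}. The second identity $UE_U = E_U \tilde{U}$ is immediate by transposing the proven $E_L L = \tilde{L} E_L$: since $L^T$ and $\tilde{L}^T$ are upper triangular and $E_L^T = E_U$ by definition, the transposed relation $L^T E_L^T = E_L^T \tilde{L}^T$ reads exactly $UE_U = E_U \tilde{U}$.

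The first identity, $L\hat{E}_L = \hat{E}_L \tilde{L}$, cannot be obtained from the proven relation by formal moves alone: in the proven relation the triangular factor sits on the right of the echelon matrix on the left-hand side, whereas here the triangular factor sits on the left. I would therefore prove it directly by mirroring the argument of \cref{prop:shape_comm}, now using the column-pivot row function $i$ of $\hat{E}_L$ (so that $\hat{E}_L = \sum_{j \in \operatorname{dom}(i)} e_{i(j)} e_j^T$). A short entrywise calculation gives $(L\hat{E}_L)_{k,l} = L_{k, i(l)}$ for $l \in \operatorname{dom}(i)$ and zero otherwise; one then defines $\tilde{L}_{k,l} = L_{i(k), i(l)}$ on $\operatorname{dom}(i) \times \operatorname{dom}(i)$, extends by the identity off this set, and checks entrywise that $\hat{E}_L \tilde{L}$ matches $L\hat{E}_L$. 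Lower-triangularity of $\tilde{L}$ follows from the monotonicity of $i$, just as in \cref{prop:shape_comm}.

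The third identity $\hat{E}_U U = \tilde{U} \hat{E}_U$ then drops out by transposing the first: $(L\hat{E}_L)^T = \hat{E}_L^T L^T = \hat{E}_U U$ equals $(\hat{E}_L \tilde{L})^T = \tilde{L}^T \hat{E}_L^T = \tilde{U} \hat{E}_U$. The only auxiliary fact needed is $\hat{E}_L^T = \hat{E}_U$, which follows by transposing the defining equation $J\hat{E}_L J = E_L$ (using $J^T = J$) to get $J \hat{E}_L^T J = E_U$ and comparing with $J \hat{E}_U J = E_U$. The principal obstacle is thus the direct proof of the first identity, but it is a near-verbatim translation of the proof of \cref{prop:shape_comm}---no new combinatorial content is needed, only careful bookkeeping of the pivot functions for $\hat{E}_L$.
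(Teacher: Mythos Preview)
Your handling of the second identity and your transpose derivation of the third from the first are fine and match the paper in spirit. The divergence is your treatment of $L\hat{E}_L = \hat{E}_L\tilde{L}$. Your assertion that this ``cannot be obtained from the proven relation by formal moves alone'' is not correct: you have overlooked $J$-conjugation, which the paper sets up exactly for this purpose. The paper's route is to $J$-conjugate $E_L L = \tilde{L}E_L$ (using $JE_LJ = \hat{E}_U$ and $JLJ = U$) to obtain the third identity $\hat{E}_U U = \tilde{U}\hat{E}_U$ directly, and then to transpose that (using $\hat{E}_U^T = \hat{E}_L$) to get the first. So all three relations drop out of the base proposition by pure symmetry moves, with no replay of the entrywise argument.

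Your direct proof of the first identity is also not quite the ``near-verbatim translation'' you claim. In \cref{prop:shape_comm}, a zero row $k$ of $E_L$ automatically makes row $k$ of $E_L L$ vanish, and the padding of $\tilde{L}$ handles the other side. Here the roles flip: a zero row $k$ of $\hat{E}_L$ forces row $k$ of $\hat{E}_L\tilde{L}$ to vanish, but row $k$ of $L\hat{E}_L$ equals $\sum_{p\le k} L_{kp}(\hat{E}_L)_{p\,l}$, which is not obviously zero. You need the structural fact that every pivot row of $\hat{E}_L$ lies strictly below every zero row, so that lower-triangularity of $L$ kills those terms. That is a genuine extra ingredient your sketch elides; the paper's $J$-conjugation route avoids it entirely.
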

\begin{proof}
Taking transpose on the commutation result for $E_L$
\[ (E_LL)^T = (\tilde{L}E_L)^T \]
\[ L^TE_L^T = E_L^T\tilde{L}^T \]
Rewriting to denote the shapes we get,
\[ UE_U = E_U\tilde{U} \]
Taking the J-Conjugate of the $E_L$ commutation result we have,
\[ J(E_LL)J = J(\tilde{L}E_L)J \]
\[ JE_LLJ = \hat{E}_UJLJ = \hat{E}_UU \]
\[ J\tilde{L}E_LJ = \tilde{U}JE_LJ =\tilde{U}\hat{E}_U \]
We get the commutation result for $\hat{E}_U$
\[ \hat{E}_UU = \tilde{U}\hat{E}_U \]
Taking the transpose of the above result, we get,
\[ (\hat{E}_UU)^T = (\tilde{U}\hat{E}_U)^T \]
\[ U^T\hat{E}_U^T = \hat{E}_U^T\tilde{U}^T \]
Rewriting to denote shapes,
\[ L\hat{E}_L = \hat{E}_L\tilde{L} \]
\end{proof}

\begin{proposition}\label{prop:time_comm}
If the $E$-type matrix is size $m\times n$, the shape commutations in \cref{prop:EL_L_comm} and \cref{prop:other_comm} take $O(\max(m,n)^2)$ time.
\end{proposition}
\begin{proof}
The work is done in simply forming the $\tilde{L}$ or $\tilde{U}$ matrices.  These are either $m\times m$ or $n\times n$ matrices, depending on the commutation being performed.  Note that the entries in this new matrix come directly from $L$ or $U$ respectively, using the $E$ matrix to shift indices appropriately, so no work must be done other than reading off the correct entries.
\end{proof}
More detailed bounds might be obtained, but the above covers all possibilities. Again, sparsity may be exploited to make this bound pessimistic.

\begin{proposition}\label{prop:comm_invert}
Let $\widetilde{L^{-1}}$ satisfy $L^{-1}\hat{E}_L = \hat{E}_L \widetilde{L^{-1}}$, and let $\tilde{L}$ satisfy $L \hat{E}_L = \hat{E_L} \tilde{L}$.  Then $\hat{E}_L \widetilde{L^{-1}} \tilde{L} = \hat{E_L}$
\end{proposition}
\begin{proof}
\begin{align*}
    \hat{E_L} &= L^{-1} L \hat{E}_L\\
    &= L^{-1} \hat{E}_L \tilde{L}\\
    &= \hat{E}_L \widetilde{L^{-1}}\tilde{L}
\end{align*}
\end{proof}
One may prove similar results for the other commutation types.  As a consequence, it does not matter if a triangular matrix is inverted before or after a shape commutation.

\subsection{Sequential Algorithm for Barcode Form}
We now have the components necessary to compute the barcode form of a type A quiver representation by changing bases to put each matrix in pivot form.  This accomplished in two passes, the first of which leaves each matrix as the product of $E_L L$ or $\hat{E}_L L$, and the second of which uses the shape commutation relations to remove the $L$ factors.  See \cref{alg:typeA_alg_ri} for details.

\begin{algorithm}[htb]
\caption{Obtain Barcode factorization of type $A$ quiver: Rightward-initial}
\label{alg:typeA_alg_ri}
\begin{algorithmic}[1]
\STATE{{\bf Input:} Type $A_n$ quiver representation.}
\STATE{{\bf Result:} Barcode form of quiver representation.}
\FOR{edge $i = 0,\dots,n-1$}
    \IF{$\leftarrow$}
        \STATE{Apply $LE_LUP$ factorization to matrix on edge $i$.}
        \STATE{Pass $UP$ factors to edge $i+1$.}
    \ELSE
        \STATE{Apply $PU\hat{E}_LL$ factorization to matrix on edge $i$.}
        \STATE{Pass $PU$ factors to edge $i+1$.}
    \ENDIF
\ENDFOR
\STATE{Pass $L$ term on edge $n$ to edge $n-1$.}
\FOR{edge $i=n-1,\dots, 0$}
    \IF{$\leftarrow$}
        \STATE{Commute $L_1E_LL_2 = \tilde{L}_1E_L$.}
        \STATE{Pass $\tilde{L_1}$ to edge $i-1$.}
    \ELSE
        \STATE{Commute $L_1\hat{E}_LL_2 = \hat{E}_L\tilde{L}_2$.}
        \STATE{Pass $\tilde{L_2}$ to edge $i-1$.}
    \ENDIF
\ENDFOR
\STATE{Change basis on first vector space to remove $L$ term on first edge.}
\end{algorithmic}
\end{algorithm}

We can also initiate the first sweep from the right to the left, to get a leftward initial algorithm.  In this case, we replace the $LE_L UP$ factorization with a $PLE_U U$ factorization and $PU\hat{E}_L L$ with $U\hat{E}_U LP$ and pass the $P$ and $L$ factors while working right to left.  Similarly, we replace the shape commutations with the $E_L$-type matrices with the appropriate commutations with $E_U$ type matrices.

Finally, note that the commutation relations established in \cref{sec:shape_comm} do not change the nonzero structure of the $E$ matrices.  Thus, it is possible to extract the barcode without performing the backward pass of \cref{alg:typeA_alg_ri} if one does not care to form the change of basis.

\subsubsection{Time Complexity}\label{sec:sequential_time_complexity}
We'll analyze the complexity of \cref{alg:typeA_alg_ri} for a quiver of type $A_{n+1}$ (with $n$ edeges).  For simplicity, let $d$ denote the largest dimension of a vector space in the quiver representation.
\begin{proposition}\label{prop:sequential_time_complexity}
The barcode form of a quiver representation of type $A_n$ with $d = \max\{V_0, \dots, V_n\}$ is computed by \cref{alg:typeA_alg_ri} in $O(nd^3)$ time.
\end{proposition}
\begin{proof}
In the forward pass, for each edge either a LEUP or PUEL factorization is formed in $O(d^3)$ time.  Then the U and P factors are applied to the next edge for an additional cost of $O(d^\omega)$, where $\omega\in [2,3]$ is the exponent for matrix multiplication.  Thus, every edge can be processed in $O(d^3)$ time.  All $n$ edges are processed sequentially for a total time of $O(nd^3)$.

In the backward pass, on each edge we must perform a shape commutation $E_L L = \tilde{L}E_L$, or $L\hat{E}_L = \hat{E}_L \tilde{L}$.  $\tilde{L}$ can be formed by reading off entries of $L$, so can be formed in $O(d^2)$ time.  We then form the product of two lower-triangular matrices in $O(d^\omega)$ time, and then pass the product $L$ matrix to the left.  This might require inversion of the $L$ matrix, which can be achieved in $O(n^\omega)$ time.  All $n$ edges are processed sequentially for a total time of $O(nd^\omega)$.

The forward pass dominates the complexity, for a time of $O(nd^3)$.
\end{proof}

\subsection{Parallelization through Divide-and-Conquer}\label{sec:parallel_alg}
We can also parallelize the algorithm for computing the barcode factorization of a quiver. The protocol of matrix factorizations and matrix passing will be different.
\subsubsection{LQU and UQL Factorizations}
The $LQU$ factorization is different from the triangular factorizations introduced before, as it does not perform any pivoting and therefore there is no auxiliary permutation matrix produced. Instead, we sacrifice structure in the pivot matrix and obtain a general pivot matrix $Q$ as opposed to echelon pivot matrices $E$.

\begin{algorithm}
\caption{LQU factorization}
\label{alg:LQU}
\begin{algorithmic}[1]
\STATE{{\bf Input:} $m\times n$ matrix $A$}
\STATE{{\bf Result:} Factorization $A = LQU$}
\STATE{$L \gets I_m$, $Q \gets A$, $U \gets I_n$}
\STATE{$ j = 1$}
\WHILE{$j <= n$} 
    \IF{column $j$ has a non-zero in a non-pivot row, let the first such row be $i$}
        \STATE{Zero out all elements in non-pivot rows in column $j$ below $i$, recording operations in $L$.}
        \STATE{Mark $i$ as pivot row}
        \STATE{$j = j+1$}
    \ELSE
        \STATE{$j = j+1$}
    \ENDIF
\ENDWHILE
\STATE{$ i = 1$}
\WHILE{$i <= m$} 
    \IF{row $i$ is a pivot-row with pivot at $j$}
        \STATE{Zero out all elements after $j$ in row $i$, recording operations in $U$.}
        \STATE{$i = i+1$}
    \ELSE
        \STATE{$i = i+1$}
    \ENDIF
\ENDWHILE
\end{algorithmic}
\end{algorithm}

\begin{proposition}
\cref{alg:LQU} computes a factorization $A = LQU$
\end{proposition}
\begin{proof} In order to see that the algorithm is correct, we first note that both the row operations and column operations are triangular, i.e. row $i$ is used to eliminate rows at positions greater than $i$ and column $j$ is used to eliminate columns at positions greater than $j$. Thus the recorded $L$ and $U$ matrices are indeed lower and upper triangular respectively.

Now it is left to prove that the resulting matrix $Q$ has pivot structure. If we prove that the only non-zeros at the end of the algorithm are the pivots then we are done, as pivots are chosen such that no two of them share a row or column. We will argue by contradiction: suppose there is a non-zero that was not eliminated at the end of the algorithm. It has to be either in a non-pivot row or its column position is before a pivot in a pivot row, otherwise it would have been eliminated by the column operations. Such an element cannot exist as it should have been eliminated by row operations by a pivot above it in the same column. The pivot cannot be below as it would imply that we did not pick the first non-zero in a non-pivot row when choosing the pivot for this column.
\end{proof}
\begin{proposition}\label{prop:LQU_time}
The $LQU$ factorization of a $m\times n$ matrix $A$ can be computed in $O(mn \max(m,n))$ time.
\end{proposition}
\begin{proof}
In the first while-loop of \cref{alg:LQU}, we loop over columns.  Each column eliminates $O(m)$ entries via row operations.  Each row operation takes $O(n)$ time (by operating on each column), so processing a column takes $O(mn)$ time.  Multiplying this bound by $n$ columns, we obtain $O(mn^2)$ time for the first while-loop.

The second while-loop, we loop over rows.  Each row may eliminate up to $n$ entries using column operations. Each column operation takes $O(m)$ time (operating on each row), so processing a row takes $O(mn)$ time.  Multiplying by $m$ rows, we obtain $O(m^2 n)$ time for the second while-loop.

Finally, we combine the bounds on both while-loops to obtain at total time of $O(mn \max(m,n))$.
\end{proof}

We can derive the UQL factorization from the LQU factorization using conjugation by J-matrices.
\begin{align*}
    A &= JJAJJ\\
    &= JLQUJ &LQU \text{ of } JAJ\\
    &= (JLJ)(JQJ)(JUJ) &\text{inserting } I = JJ \text{ between matrices}\\
    &= \tilde{U} \tilde{Q} \tilde{L}
\end{align*}

As a corollary to \cref{prop:LQU_time}, the $UQL$ factorization can be computed in $O(mn\max(m,n))$ time, since $J$-conjugation takes $O(mn)$ time.  Alternatively, one can make straightforward modifications to \cref{alg:LQU} to compute the UQL factorization.

\subsubsection{E Matrix transformations} \label{sec:emat_tran}
In this section we will see how we can factorize any pivot matrix $Q$ into a permutation and an echelon pivot matrix

\begin{proposition}\label{prop:pivot_trans}
Given any pivot matrix Q, we can rewrite it as the following
\begin{align}
    Q &= E_LP \\
    Q &= PE_U \\
    Q &= \hat{E}_UP \\
    Q &= P\hat{E}_L \\
\end{align}
where $P$ is an appropriate permutation matrix.
\end{proposition}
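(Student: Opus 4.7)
The plan is to prove the first identity $Q = E_L P$ by an explicit construction, then derive the other three by transposition and $J$-conjugation, using the fact that these operations carry permutation matrices to permutation matrices and map the four echelon shapes to each other.

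For the first case, write the pivot matrix as $Q = \sum_{(i,j)\in S} e_i e_j^T$, where $S$ is the set of pivot positions and has $r \le \min(m,n)$ elements. Since $Q$ is a pivot matrix, the row indices appearing in $S$ are distinct and the column indices appearing in $S$ are distinct. Sort the elements of $S$ by row index to get an enumeration $(i_1,j_1),\dots,(i_r,j_r)$ with $i_1 < i_2 < \dots < i_r$. Define $E_L = \sum_{k=1}^{r} e_{i_k} e_k^T$; this satisfies both properties in the definition of $E_L$-shape, since the pivot-row function $k \mapsto i_k$ is strictly increasing on $\{1,\dots,r\}$, and the zero columns of $E_L$ are exactly those indexed by $k > r$, which sit to the right of all non-zero columns. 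Now extend the injective map $\{1,\dots,r\} \to \{1,\dots,n\}$, $k \mapsto j_k$, to any bijection $\pi$ of $\{1,\dots,n\}$ and set $P = \sum_{k=1}^{n} e_k e_{\pi(k)}^T$; this is a permutation matrix. A direct computation gives
\[
E_L P = \sum_{k=1}^{r} e_{i_k} e_k^T \cdot \sum_{\ell=1}^{n} e_\ell e_{\pi(\ell)}^T = \sum_{k=1}^{r} e_{i_k} e_{\pi(k)}^T = \sum_{k=1}^{r} e_{i_k} e_{j_k}^T = Q.
\]

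The remaining three identities are obtained by functorial manipulations. For $Q = P E_U$, observe that $Q^T$ is again a pivot matrix, so by the first case $Q^T = E_L P'$ for some permutation $P'$; transposing and using that the transpose of an $E_L$-matrix is an $E_U$-matrix, we get $Q = (P')^T E_U$, and $(P')^T$ is a permutation. For $Q = \hat E_U P$, note that $JQJ$ is a pivot matrix (conjugation by $J$ reverses row and column orderings, which preserves the pivot property), so by the first case $JQJ = E_L P'$; left- and right-multiplying by $J$ and using $J^2 = I$ together with $J E_L J = \hat E_U$ yields $Q = \hat E_U \cdot (J P' J)$, and $JP'J$ is again a permutation. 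For $Q = P \hat E_L$, apply the same $J$-trick to the second case: from $JQJ = P' E_U$ one gets $Q = (JP'J)\hat E_L$.

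The only step that requires any care is the construction of $E_L$ in the first case, and the main (mild) obstacle there is verifying both clauses of the $E_L$-shape definition simultaneously: strict monotonicity of the pivot-row function on its domain, and the requirement that all zero columns lie strictly to the right of every non-zero column. Both are handled automatically by the ``sort pivots by row, fill the first $r$ columns'' recipe. Once the first case is established, the other three reduce to bookkeeping with the identities $E_L^T = E_U$, $J E_L J = \hat E_U$, and $J E_U J = \hat E_L$, together with the observation that permutation matrices are closed under transpose and $J$-conjugation.
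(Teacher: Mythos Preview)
Your proof is correct, but it proceeds differently from the paper. The paper simply applies the four triangular factorizations $LE_LUP$, $PLE_UU$, $U\hat{E}_ULP$, $PU\hat{E}_LL$ (already established in the section on triangular factorizations) directly to $Q$ and observes that, because a pivot matrix has at most one nonzero per row and per column, the elimination steps never fire; hence the $L$ and $U$ factors are identities and only the permutation and echelon pivot factors survive. Your approach is instead a direct combinatorial construction for $Q = E_L P$ (sort pivots by row, pack them into the first $r$ columns, then complete the column map to a permutation), followed by symmetry reductions via transpose and $J$-conjugation for the other three cases. Your argument is self-contained and does not rely on the correctness of \cref{alg:LEUP}, at the cost of a little more bookkeeping; the paper's argument is shorter because it recycles machinery already built. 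Both establish the same result.
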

\begin{proof}
We apply the $LE_LUP$, $PLE_UU$, $U\hat{E}_ULP$ and $PU\hat{E}_LL$ factorizations to $Q$ and note that the triangular matrices are just the identity matrices.
This is because the triangular matrices are produced to eliminate one entry with another entry in the same row or column, but such a situation cannot occur in a pivot matrix $Q$, so only permutation operations are performed in the factorization, resulting in a permutation matrix and an echelon pivot matrix.
\end{proof}

If the pivot matrices $Q, E$, and $P$ are stored in a sparse format (such list of lists format), the factorizations in \cref{prop:pivot_trans} can be computed in $O(d \log d)$ time, where $d = \max(m,n)$ and $Q$ is a $m\times n$ pivot matrix, by simply sorting the nonzero entries in $Q$ by either row or column index.

\subsubsection{Divide and conquer}
Now we will show how we can divide a type A quiver into two parts and perform the computation in parallel for each of the parts. We divide a quiver representation into two parts at position $m$ (in practice, we choose $m = \lfloor n/2 \rfloor$ for a quiver of length $n$). For simplicity, we'll visualize the scheme on a persistence-type quiver, with the understanding that reversed arrows will make appropriate modifications to factorizations.
\begin{center}
    \begin{tikzcd}[row sep=small, column sep=huge]
    \cdot \ar[r, leftarrow, "A_0"] &\cdot \cdots\ar[r, leftarrow, "A_{m-1}"] & \cdot\ar[r, leftarrow, "A_{m}"]& \cdot\ar[r, leftarrow, "A_{m+1}"] &\cdots \cdot\ar[r, leftarrow, "A_{n-1}"] & \cdot
    \end{tikzcd}
\end{center}

We can now perform two versions of the sequential algorithm in parallel. For quiver $Q_\alpha$, we will use the rightward-initial algorithm (\cref{alg:typeA_alg_ri}), and for quiver $Q_\beta$, we will use the leftward-initial variant. Notice that the terminal matrices are collected at $A_m$.
\begin{center}
    \begin{tikzcd}[row sep=small, column sep=huge]
    \cdot \ar[r, leftarrow, "L_\alpha E_0"] &\cdot \cdots\ar[r, leftarrow, "E_{m-1}"] & \cdot\ar[r, leftarrow, "U_\alpha P_\alpha A_{m} P_\beta L_\beta"]& \cdot\ar[r, leftarrow, "E_{m+1}"] &\cdots \cdot\ar[r, leftarrow, "E_{n-1} U_\beta"] & \cdot
    \end{tikzcd}
\end{center}

We can now multiply out the matrices around $A_m$, and perform an $LQU$ factorization using \cref{alg:LQU}
\begin{center}
    \begin{tikzcd}[row sep=small, column sep=huge]
    \cdot \ar[r, leftarrow, "L_\alpha E_0"] &\cdot \cdots\ar[r, leftarrow, "E_{m-1}"] & \cdot\ar[r, leftarrow, "L_m Q_m U_m"]& \cdot\ar[r, leftarrow, "E_{m+1}"] &\cdots \cdot\ar[r, leftarrow, "E_{n-1} U_\beta"] & \cdot
    \end{tikzcd}
\end{center}

The matrices $E_0$ to $E_{m-1}$ were produced by the rightward-initial algorithm, so they can be used to commute the $L_m$ factor all the way to the left. This process is very similar to the second sweep of the rightward-initial algorithm. Similarly, the matrices $E_{m+1}$ to $E_{n-1}$ were produced by the leftward initial algorithm, and can be used to commute the $U_m$ factor towards the right in a manner similar to the second sweep of the leftward-initial algorithm.
\begin{center}
    \begin{tikzcd}[row sep=small, column sep=huge]
    \cdot \ar[r, leftarrow, "\tilde{L}_\alpha E_0"] &\cdot \cdots\ar[r, leftarrow, "E_{m-1}"] & \cdot\ar[r, leftarrow, "Q_m"]& \cdot\ar[r, leftarrow, "E_{m+1}"] &\cdots \cdot\ar[r, leftarrow, "E_{n-1} \tilde{U}_\beta"] & \cdot
    \end{tikzcd}
\end{center}

We can perform change of bases on $V_0$ and $V_n$ to remove the $\tilde{L}_\alpha$ and $\tilde{U}_\beta$ terms, and obtain a valid barcode factorization of $Q_\gamma$.  In order to apply this procedure recursively, we want to convert this into the result of either a leftward-initial or rightward-initial algorithm. This can be done by transforming the pivot matrix $Q_m$ to the appropriate form in \cref{prop:pivot_trans}, and then propagating the permutation matrix either leftwards or rightwards, repeatedly using the appropriate factorization in \cref{prop:pivot_trans}. 

Note that the results may not be exactly equal to the result obtained from applying either a rightward-initial or leftward-initial algorithm on the entire quiver. While the echelon matrices are of the right shape, the terminal matrices $P\tilde{U}_{\beta}$ or $\tilde{L}_{\alpha}P$ appear in reversed shape order.  This does not affect our ability to merge barcode factorizations at a higher level.

We can now apply this recursively until the size of the quiver is small enough that it is more efficient to apply one of the sequential algorithms.  \cref{alg:dq_barcode} gives the general version of the algorithm with arbitrary edge directions.

\begin{algorithm}[htb] 
\caption{Divide and Conquer Barcode Form}
\label{alg:dq_barcode}
\begin{algorithmic}[1]
\STATE{{\bf Input:} Type $A_{n+1}$ quiver representation.}
\STATE{{\bf Result:} Barcode form of quiver representation in either leftward or rightward form.}
\STATE{choose $m \in {0,\dots,n-1}$}
\STATE{Compute leftward barcode form of $A_0, \dots, A_{m-1}$}
\STATE{Compute rightward barcode form of $A_{m+1}, \dots, A_{n-1}$}
\STATE{Pass change of bases to $A_m$ to form $\tilde{A}_m$.}
\IF{arrow $m$ is $\leftarrow$}
    \STATE{Form $\tilde{A}_m = L_m Q_m, U_m$}
\ELSE
    \STATE{Form $\tilde{A}_m = U_m Q_m, L_m$}
\ENDIF
\STATE{Commute $L_m$ to the left, $U_m$ to the right}
\IF{Rightward Form}
    \STATE{Form $Q_m = E_L P$, propagate $P$ rightward}
\ENDIF
\IF{Leftward Form}
    \STATE{Form $Q_m = P E_U$, propagate $P$ leftward}
\ENDIF
\end{algorithmic}
\end{algorithm}

\begin{proposition}\label{prop:dq_complexity_sequential}
Let $d$ denote the maximum vector space dimension in a quiver representation of type $A_{n+1}$.  When \cref{alg:dq_barcode} is used recursively, it computes the barcode form in $O(d^3 n + (d\log d) (n \log n))$ time when run sequentially.
\end{proposition}
\begin{proof}
Let $T(n)$ denote the time it takes to compute either the leftward or rightward barcode form on $n$ edges.  Because we use \cref{alg:dq_barcode} recursively, we can write
$$T(n) = 2T(\frac{n}{2}) + f(n)$$
where $f(n)$ is the time it takes to combine the two halves of the quiver representation and transform into either a leftward or rightward form.

Note that Forming $\tilde{A}_m$ and the LQU factorization will take $O(d^3)$ time.  
We then commute $L_m$ all the way left in $O(d^2)$ time, and $U_m$ all the way right in $O(d^2)$ time using \cref{prop:comm_invert} and maintaining the products of $E_L$ and $\hat{E}_L^\dagger$ and the products of $E_U$ and $\hat{E}_U^\dagger$ respectively.  Finally, we transform the quiver representation into either rightward or leftward form.  This requires transforming $O(\frac{n}{2})$ pivot matrices into factorizations as in \cref{prop:pivot_trans}, each of which can be computed in $O(d \log d)$ time, and updating the products of the $E$-type matrices, which can be done at a cost of $O(d)$ time on each edge.

Combining the above, we have
$$T(n) = 2T(\frac{n}{2}) + O(\frac{n}{2} d\log d + d^3)$$
We can now apply the master theorem for recursion \cite{CLRS} to obtain the asymptotic run time.  The $O(d^3)$ computation is done at every leaf of the recursion for a time of $O(n d^3)$, and the exponent on $n$ in propagation of the permutation matrix is on par with the critical exponent of the recursion $1 = \log_2(2)$, so we obtain a factor of $O(d\log d n\log n)$.

\end{proof}

\cref{prop:dq_complexity_sequential} indicates that there isn't an advantage to running the divide and conquer algorithm sequentially, and in fact we expect it to run asymptotically slower as $n$ increase.  However, we do see an advantage if we parallelize the computation, as seen in the following proposition

\begin{proposition}\label{prop:dq_complexity_parallel}
Let $d$ denote the maximum vector space dimension in a quiver representation of type $A_{n+1}$.  When \cref{alg:dq_barcode} is used recursively, it computes the barcode form in $O((d \log d) n + d^3 \log n)$ time when recursion is run in parallel with an unlimited number of parallel processes available.
\end{proposition}
 \begin{proof}
 The analysis is much the same as in \cref{prop:dq_complexity_sequential}, but because we can use parallelism when computing the barcode form of the two sub-quivers, we obtain
 $$T(n) = T(\frac{n}{2}) + O((d\log d) n + d^3)$$
 We have to perform $O(d^3)$ work at $\log n$ levels of recursion, and now the $(d \log d) n$ term dominates the critical exponent of the recursion in the master theorem.  Thus, we obtain the bound.
 \end{proof}
 
 From \cref{prop:dq_complexity_parallel}, we see an advantage to using \cref{alg:dq_barcode} over the sequential \cref{alg:typeA_alg_ri} when parallelism is available, particularly when $d$ is large.  There are a variety of factors that will determine the true trade-off, such as how many parallel processes are available, and how sparsity in the matrices affects computation.  Computational experiments can be found in \cref{sec:performance}.

\subsection{Correctness and Uniqueness of the Barcode Factorization}\label{sec:alg_proof}

We have presented sequential and divide and conquer parallel algorithms to produce a barcode matrix $\Lambda$ from the companion matrix $A$ of a finite dimensional quiver representation of type $A_n$.  In this section we'll show the algorithm produces a quiver isomorphism $A \cong \Lambda$, and that $\Lambda$ uniquely defines the quiver isomorphism class.

\begin{proposition}
Every finite dimensional quiver representation of type $A_n$ has a barcode form.
\end{proposition}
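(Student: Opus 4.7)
The plan is to obtain existence constructively from the sequential algorithm described in \cref{sec:quiveralg} (say, \cref{alg:typeA_alg_ri}), by showing that every step of that algorithm either (i) factors a single edge matrix, or (ii) passes an invertible factor across a node, and that both kinds of steps correspond to conjugation of the companion matrix by a block-diagonal invertible matrix. The output then has the form $A = B \Lambda B^{-1}$ with $B$ block-diagonal, and $\Lambda$ will consist entirely of echelon pivot matrices on its edges, hence of pivot matrices, so $\Lambda$ is in barcode form by the proposition characterizing barcode form at the end of \cref{sec:quivertda}.

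Concretely, I would first invoke the triangular factorizations (\cref{alg:LEUP} and its three variants derived via transpose and $J$-conjugation) to argue that for every edge matrix and every choice of factorization type from \cref{fig:first_sweep}, a decomposition of the required shape exists over an arbitrary field $\FF$. These are the only non-trivial ``atomic'' operations; the rest of the forward pass consists of matrix-passing the invertible $UP$ (or $PU$) factor across the next node, which by \cref{cor:targ_iso} and \cref{cor:source_iso} is a quiver isomorphism realized by a block-diagonal change of basis acting only at that node. By induction on edges, after the forward sweep the quiver is isomorphic to one whose edges are all $LE$ or $\hat{E}L$ matrices, plus a residual $L$ factor at the terminal edge which is absorbed by one more matrix-passing step.

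For the backward pass, the commutation identities of \cref{sec:shape_comm} (e.g.\ $L_1 E_L L_2 = \tilde{L}_1 E_L$ and its three $J$-conjugate/transposed variants in \cref{fig:second_sweep}) let each $L$ factor migrate one edge to the left, producing a new $\tilde L$ which is then absorbed at the next commutation. Each such step is again just a change of basis at a single node, hence a block-diagonal quiver isomorphism. After sweeping all the way back, the only remaining non-echelon factor is an $L$ at the leftmost edge, which is removed by a final matrix-pass into a basis change at the boundary vertex. At this point every edge matrix is an echelon pivot matrix, so in particular a pivot matrix, so by the ``pivot matrix iff barcode form'' proposition the resulting companion matrix $\Lambda$ is in barcode form, and the composition of all the block-diagonal quiver isomorphisms used along the way gives the required $B$ with $A = B \Lambda B^{-1}$.

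The main thing to be careful about is bookkeeping: one must check that the matrix-passing steps used to dispose of the invertible factors and the commutation steps used in the backward sweep are indeed conjugations by a \emph{block-diagonal} matrix respecting the quiver's dimensions (as opposed to some more general transformation), and that the factorizations from \cref{fig:first_sweep} are chosen consistently with the arrow directions so that the factor left on each edge after the forward pass is of a shape ($E_L$ or $\hat E_L$) compatible with the commutation rule available in the backward pass. Both are mechanical once one has set up the correspondence of \cref{lem:diagram_matrix} and the table in \cref{fig:second_sweep}, but they are the only place where the argument could go wrong.
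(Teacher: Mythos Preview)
Your proposal is correct and takes essentially the same approach as the paper: the paper's proof is a one-liner (``This follows immediately from the existence of the LEUP and PLEU factorizations at each step, and the commutations established''), and what you have written is exactly the detailed unpacking of that sentence, together with the observation that each step is a block-diagonal conjugation (which the paper defers to the next proposition). Your careful attention to the bookkeeping --- that the factorization type must match the arrow direction so the backward-pass commutations apply --- is more explicit than the paper's own treatment.
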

\begin{proof}
This follows immediately from the existence of the LEUP and PLEU factorizations at each step, and the commutations established.
\end{proof}

\begin{proposition}
The barcode factorization $A = B \Lambda B^{-1}$ is a quiver isomorphism.
\end{proposition}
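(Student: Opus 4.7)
The plan is to verify the three constituent claims: (i) $B$ is block-diagonal with blocks of the dimensions prescribed by the quiver; (ii) $\Lambda$ has non-zero blocks only where $A$ does, i.e., on edges of the underlying directed graph; and (iii) the identity $A = B \Lambda B^{-1}$ literally holds.  All three follow by tracking an invariant through each atomic step of the algorithm.

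First I would set up the invariant.  Let $M^{(t)}$ denote the companion matrix of the quiver representation after $t$ algorithmic steps (so $M^{(0)} = A$) and let $B^{(t)}$ be the cumulative block-diagonal basis change (so $B^{(0)} = I$).  The claim is that at every step $t$ we have $A = B^{(t)} M^{(t)} (B^{(t)})^{-1}$, with $B^{(t)}$ block-diagonal of the correct block sizes, and $M^{(t)}$ sharing the block-sparsity pattern of $A$.  The base case is trivial.  For the inductive step I would classify each atomic operation performed by the algorithm of \cref{sec:quiveralg}:
\begin{enumerate}
  \item An in-place factorization of an edge block (e.g. $A_i = L_i E_i U_i P_i$, or any of its $PLE_UU$, $U\hat E_U L P$, $PU\hat E_L L$ variants) does not change $M^{(t)}$ as a linear map, it merely rewrites one block as a product; $B^{(t)}$ is unchanged.
  \item A \emph{matrix pass} of an invertible factor through a node.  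By \cref{cor:targ_iso} and \cref{cor:source_iso}, this is exactly an isomorphism of quivers that multiplies a single diagonal block of $B^{(t)}$ on the right by the invertible factor and updates the two (at most) incident edge blocks; it preserves both block-diagonality of $B^{(t)}$ and the edge-block sparsity of $M^{(t)}$.
  \item A backward-sweep shape commutation (e.g. $L_1 E_L L_2 = \tilde L_1 E_L$ and its variants from \cref{sec:shape_comm}) is a composition of (1) and (2): the triangular factor is matrix-passed through a node, and the commutation relation rewrites the adjacent echelon block without changing its zero/nonzero pattern (since, as noted at the end of \cref{sec:shape_comm}, these commutation relations preserve the pivot positions of the $E$-type matrix).
\end{enumerate}
At algorithm termination, every edge block of $M^{(T)}$ has been reduced to an echelon pivot matrix, so $M^{(T)} = \Lambda$ is in barcode form, and $B = B^{(T)}$ is block-diagonal by induction.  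The identity $A = B \Lambda B^{-1}$ is the terminal instance of the invariant, establishing (i)--(iii).

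For the parallel algorithm of the same section, the only additional ingredient is the merge step, in which three auxiliary identity edges are temporarily introduced on either side of the split.  I would interpret these identities as place-holders that record basis changes at the neighboring node rather than as genuine edges of the quiver: once the sub-quivers $Q_\alpha, Q_\beta$ have been reduced, the $LQU$ factorization in the central auxiliary edge and the subsequent propagation of the triangular factors outward are each instances of operations (1)--(3) above, and the final deletion of the auxiliary identity edge is legal precisely because the invariant guarantees that, at that moment, the factor sitting on it has been absorbed into the adjacent block-diagonal basis change.  The main obstacle is this bookkeeping for the parallel case: one has to verify carefully that passing a permutation or triangular factor through an $E$-block in the merge (using the transformations of \cref{sec:emat_tran}) does not silently alter the block-diagonality of the accumulated $B$ nor introduce spurious non-zero blocks off the edge pattern of $A$.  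Once that is confirmed the same induction delivers $A = B \Lambda B^{-1}$ as a genuine quiver isomorphism.
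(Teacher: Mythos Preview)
Your proof is correct and takes essentially the same approach as the paper, which simply notes that the factorizations exist at each step and that by \cref{lem:diagram_matrix} each matrix passing step is a quiver isomorphism. Your version is considerably more detailed---explicitly tracking the invariant, classifying the atomic operations, and treating the parallel merge step---but the underlying idea is identical.
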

\begin{proof}
We have shown that the factorizations at each step exist, and by \cref{lem:diagram_matrix} each matrix passing step is a quiver isomorphism.
\end{proof}

We can state the fact that a barcode determines the isomorphism class of a zigzag or persistence module in terms of the barcode form:

\begin{theorem}\label{thm:barcode_form_unique}
The barcode form $\Lambda$ of a quiver representation of type $A_n$ uniquely determines its isomorphism class.
\end{theorem}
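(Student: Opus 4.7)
The plan is to combine the quiver isomorphism $A = B \Lambda B^{-1}$ established in the preceding proposition with the classical Krull--Schmidt/Gabriel uniqueness theorem for type $A_n$ quiver representations. The overall strategy is to show that $\Lambda$ encodes, and is encoded by, a multiset of intervals which is a complete invariant of the isomorphism class.

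First I would apply the barcode extraction procedure (\cref{alg:extraction}) to $\Lambda$ in order to recover the multiset $\calB = \{(b_i, d_i)\}$. By construction of the barcode factorization, $\Lambda = P T P^T$ for a block-structure-preserving permutation $P$ with $T = \bigoplus_i I[b_i, d_i]$, so the quiver underlying $\Lambda$ is isomorphic to the direct sum of the interval indecomposables recorded in $\calB$. Combined with the preceding proposition, this yields $A \cong \Lambda \cong \bigoplus_i I[b_i, d_i]$. Two representations that reduce to the same $\Lambda$ are therefore both isomorphic to the same direct sum of interval indecomposables, hence isomorphic to each other, which is the assertion of the theorem.

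To close the loop and confirm that $\Lambda$ is a canonical form in the strong sense (i.e.\ every member of a given isomorphism class produces the same $\Lambda$), I would invoke Gabriel's theorem: every type $A_n$ quiver representation decomposes uniquely, up to reordering the summands, into interval indecomposables. Consequently any representation isomorphic to $A$ must yield the same multiset $\calB$ regardless of the algorithmic choices, so $\Lambda$ is determined up to a permutation of the diagonal blocks of $T$, which merely reorders the entries of $\calB$. Fixing the lexicographic order on the pairs $(b_i, d_i)$ mentioned in \cref{sec:quivertda} removes this ambiguity and makes $\Lambda$ unique on the nose.

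The only real obstacle is the appeal to Krull--Schmidt uniqueness for interval indecomposables; for type $A_n$ this is a well-known consequence of Gabriel's theorem on Dynkin quivers and can simply be cited. Everything else in the argument is bookkeeping that has already been done in the earlier sections: existence of the barcode factorization, the fact that the matrix passing rules give a genuine quiver isomorphism, and the correctness of \cref{alg:extraction}.
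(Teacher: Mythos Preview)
Your argument is correct, but the route you take for the ``hard'' direction differs from the paper's. You establish that isomorphic representations yield the same $\Lambda$ by appealing to Gabriel's theorem (equivalently, Krull--Schmidt for type $A_n$) as a black box: since the multiset of interval summands is a complete isomorphism invariant, any two isomorphic representations must produce the same barcode multiset and hence the same $\Lambda$ up to the permutation convention. The paper instead gives a direct, self-contained argument: given an isomorphism $M$ between two representations with barcode forms $\Lambda$ and $\Lambda'$, it picks a basis vector $v$ generating an interval $I[b,d]$ in $\Lambda$, pushes $v' = Mv$ through the commuting squares of the quiver, and argues that $v'$ must hit a basis element for a copy of $I[b,d]$ in $\Lambda'$; injectivity of this assignment plus the same reasoning applied to $M^{-1}$ gives a bijection between the interval summands. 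Your approach is shorter and perfectly legitimate, since the paper already cites Gabriel's theorem elsewhere; the paper's approach buys independence from that citation and in effect re-derives the Krull--Schmidt statement for $A_n$ quivers as a corollary of the barcode factorization, which is why the paper remarks immediately after the proof that the theorem is equivalent to the interval-indecomposable classification.
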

\begin{proof}
This follows from the fact that the barcode form encodes the interval indecomposables in \cref{prop:barcode_form}, and the fact that the interval indecomposables uniquely determine the isomorphism class \cite{gabrielI, ZZtheory2010}.

\end{proof}

This means that the matrix $\Lambda$ will not depend on the choice of basis used to form the companion matrix.
Note that the uniqueness of the Jordan normal form for companion matrices is a special case of \cref{thm:barcode_form_unique} that applies to persistence-type quivers, following \cref{sec:type_a_quiver}.

\section{Experiments and Applications}\label{sec:applications}

We'll now showcase several experiments which demonstrate the flexibility of our methodology.  We also investigate the performance of the implementation of our algorithms in the BATS software, demonstrating parallel speedups.

\subsection{Subsets and Rips Complexes}\label{sec:rips_subset_zigzag}

Let $\bX_0,\bX_1,\dots\subseteq X$ be finite subsets of a space, and $r_0,r_1,\dots \in \RR$ be parameters for the Rips construction.  We can construct a zigzag through unions of subsets
\begin{equation}
\begin{tikzcd}[column sep=small]
\calR(\bX_0; r_0) \ar[r, hookrightarrow] &\calR(\bX_0\cup \bX_1, \max\{r_0,r_1\}) \ar[r,hookleftarrow] & \calR(\bX_1; r_1) \ar[r,hookrightarrow] &\dots
\end{tikzcd}
\end{equation}
The case where $r = r_0 = r_1 = \dots$ was proposed in \cite{ZZtheory2010} as a ``topological bootstrapping'' method to investigate the robustness of homological features for samples $\bX_i$ sampled uniformly from some larger point cloud $\bX$, and initial investigations were performed by Tausz \cite{tausz2012}.  A single value of $r$ is also used in the case of \cite{ZZalg2009} where the $\bX_i$ come from thickened level sets of a map $f:\bX\to \RR$
$$
\bX_i = f^{-1}((a_i, b_i))
$$
where $(a_i,b_i) \cap (a_{i+1}, b_{i+1}) \ne \emptyset$.

Another application of a zigzag of this form is to approximate the persistent homology of the full Rips filtration.  This was originally implemented by Morozov in his Dionysus software \cite{Dionysus2}, and several variants with theoretical guarantees are investigated in \cite{ZigzagZoologyRips2015}.  In this situation, $\bX_0 \subseteq \bX_1 \subseteq \cdots$, and $r_0 \ge r_1 \ge \cdots$.  An example can be found in \cref{fig:dmzz}, where we use this construction on 200 points sampled from the unit circle, and compare to the persistent homology of a Rips filtration.  Note that while the zigzag and persistent homology results are not identical, they display the same qualitiative information.  The zigzag computation in BATS is faster than the persistence computation performed by the highly optimized Ripser software \cite{Ripser19} (0.08 seconds vs 0.12 seconds), indicating that our methodology may be employed to effectively use the constructions in \cite{ZigzagZoologyRips2015} in situations which require extreme speed.  In \cref{fig:bats_ripser}, we see a comparison of the time it takes to compute the discrete Morozov zigzag versus

\begin{figure}
    \centering
    \includegraphics[width=0.49\linewidth]{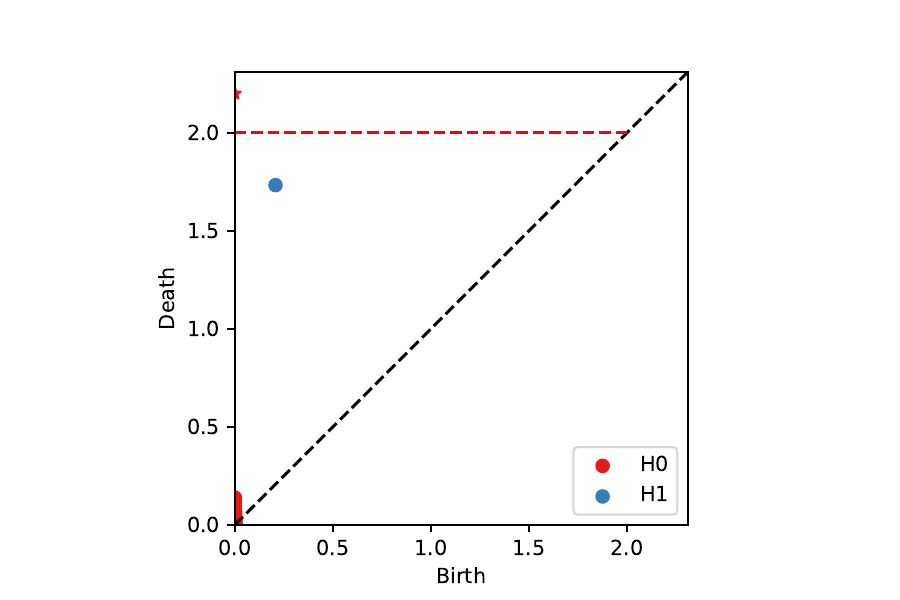}
    \includegraphics[width=0.49\linewidth]{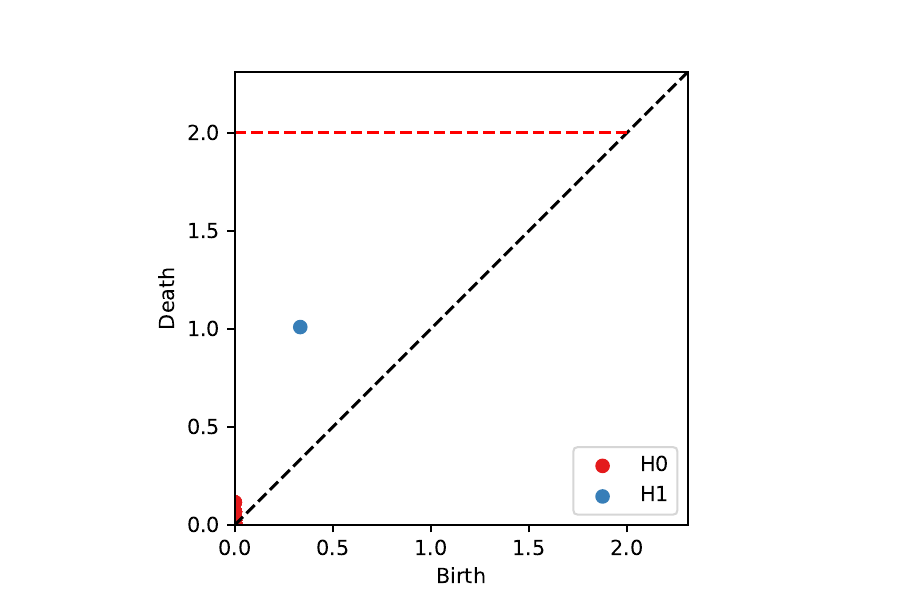}
    \caption{Left: persistence diagram for Rips filtration on 200 points sampled from the unit circle.  The unoptimized reduction algorithm runs in 20 seconds in BATS, and a version using compression \cite{bauerClearCompressComputing2014} takes 2.5 seconds.  The highly optimized Ripser software \cite{Ripser19} takes 0.12 seconds.  Right: An approximate persistence diagram created using the discrete Morozov zigzag construction in \cite{ZigzagZoologyRips2015} using the suggested parameters.  The zigzag computation takes approximately 0.08 seconds in BATS. While the birth and death times are not identical, both diagrams qualitatively display the same information, namely a single connected component and a robust $H_1$ class, agreeing with the homology of the circle.}
    \label{fig:dmzz}
\end{figure}

\begin{figure}
    \centering
    \includegraphics[height=60mm]{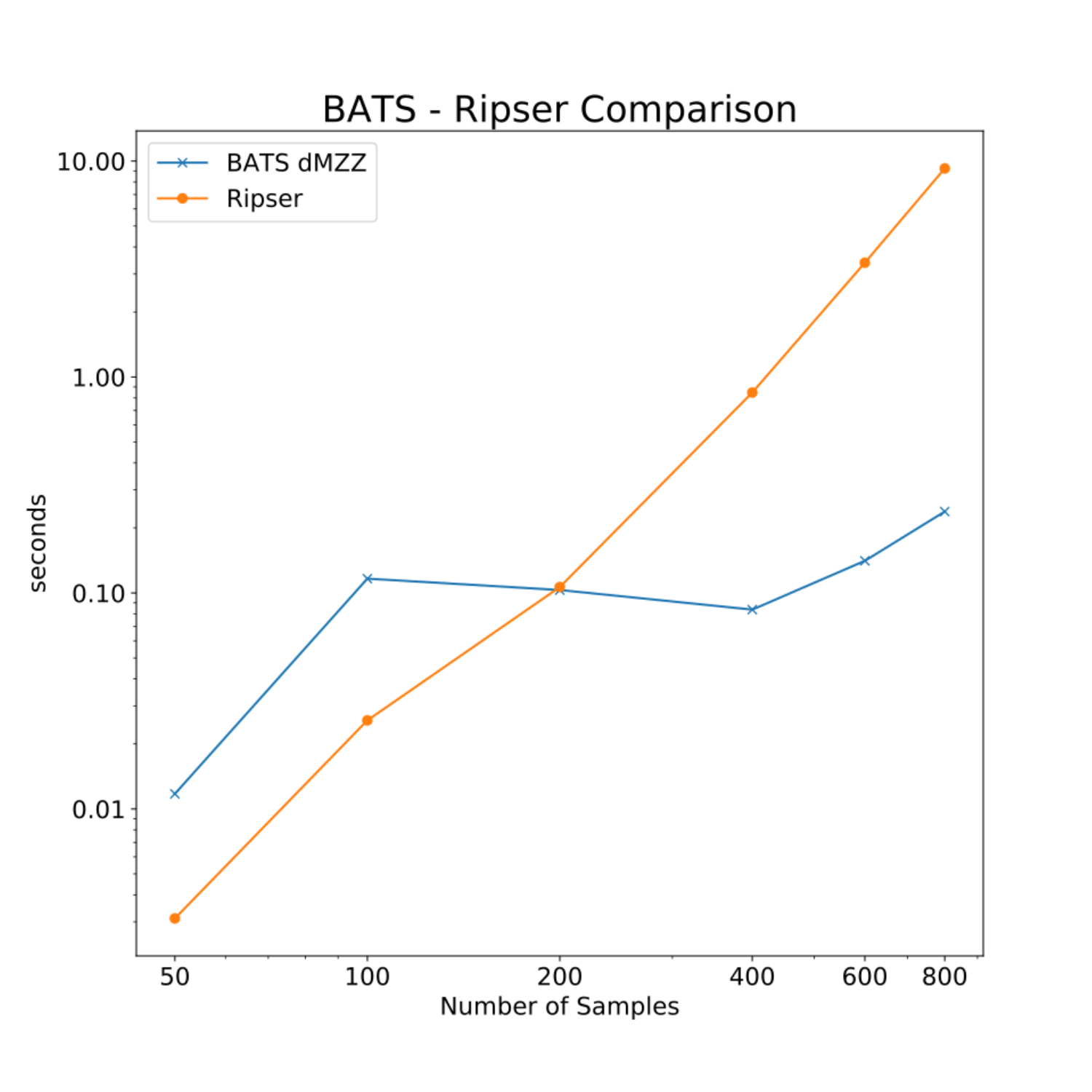}
    \caption{Time comparison between computing the discrete Morozov zigzag (dMZZ) construciton of \cite{ZigzagZoologyRips2015} using BATS versus computing persistent homology using Ripser \cite{Ripser19}.  Samples are drawn uniformly from the unit circle. For computations using over 200 samples the zigzag computation is faster.  Every point indicates the median runtime over 10 runs.}
    \label{fig:bats_ripser}
\end{figure}

\subsection{Bivariate Nerve}\label{sec:bivariate_nerve}

Another application of zigzag homology is to investigate the relationship between algebraic features in nerves of two or more covers.  This question arises naturally when generating covers algorithmically, where one might wonder how sensitive the Nerve is to choices that might be made or randomness in initialization.  A related question is the stability of witness complexes to the choice of landmark set, and a {\em bivariate witness} complex was proposed in \cite{ZZtheory2010} and subsequently investigated by Tausz \cite{tausz2012}. 
\begin{definition}\label{def:bivariate_nerve}
Given two covers $\calU$, $\calV$ of a space $X$, the bivariate cover is the fibered product $\calU \times_X \calV \subseteq \calU \times \calV$.  Explicitly,
$$\calU\times_X \calV = \{ U \times V \mid U\in \calU, V\in \calV, U\cap V \ne \emptyset\}$$
and $\calU \times \calV$ can be identified as the interesection $U \cap V$ to form another cover of $X$.  We will denote the nerve of $\calU \times_X  \calV$ as $\calN(\calU, \calV)$.
\end{definition}

Due to the product structure on $\calU \times \calV$, there are projection maps $p_U: U \times V \mapsto U$ and $p_V: U\times V \mapsto V$.  These maps extend to simplicial maps on the nerves:
\begin{equation}
\begin{tikzcd}
\calN(\calU) \ar[r, leftarrow, "p_U"] & \calN(\calU, \calV) \ar[r, "p_V"] &\calN(\calV)
\end{tikzcd}
\end{equation}

We will consider covers based on landmark sets.  Briefly, we choose a landmark set $\bL \subset \bX$, and assign each point in $\bX$ to the $k$-nearest landmarks.  An example is performed in \cref{fig:cover_zz}, where covers of a noisy circle are created.  We see that if each point belongs to only 2 sets that while there is a single long $H_1$ class, there are also many short-lived $H_1$ classes since there are no non-empty 3-way intersections to fill in small holes.  However, if every point is assigned to 3 sets, then there is a single $H_1$ class that persists the length of the diagram.

\begin{figure}
    \centering
    \includegraphics[width=0.49\linewidth]{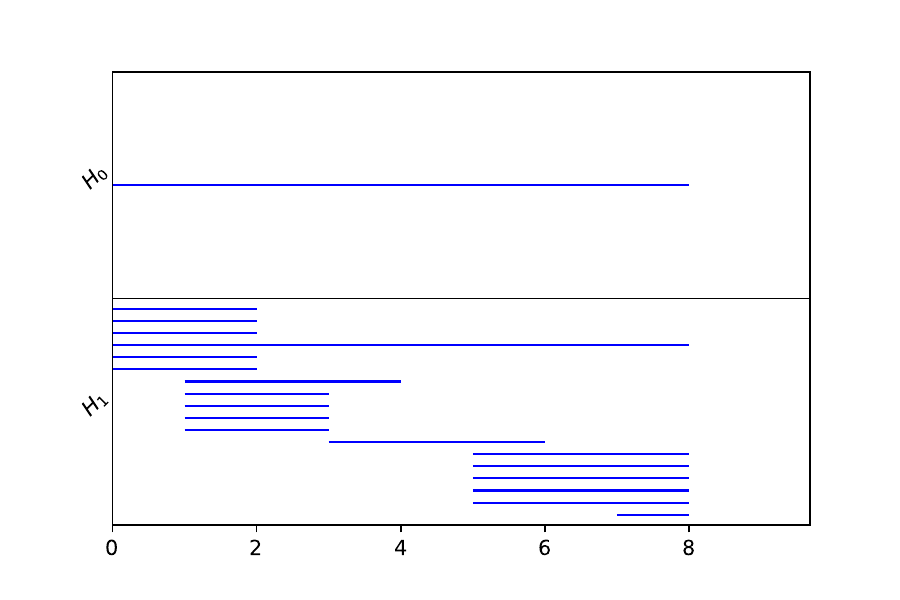}
    \includegraphics[width=0.49\linewidth]{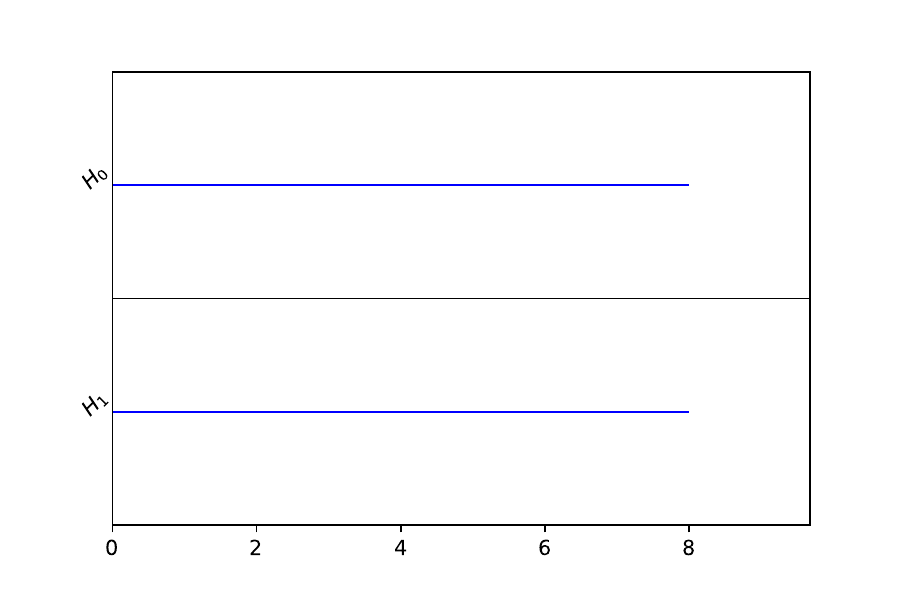}
    \caption{Zigzag barcodes of bivariate Nerve diagram on 5 covers of 500 points on the unit circle.  Covers are computed by selecting 20 random landmarks.  Left: each point is assigned to closest 2 landmarks.  Right: each point is assigned to closest 3 landmarks.  Note that both diagrams have single long bars in dimensions 0 and 1, agreeing with the homology of the circle.}
    \label{fig:cover_zz}
\end{figure}

\subsection{Sierpinski Triangle}\label{sec:sierpinksi_triangle}

Our final application is an example of how persistent homology may be used with more general cell maps.  We'll consider a sequence of spaces converging to a Sierpinksi triangle:

\begin{equation}\label{eq:sierpinski_maps}
\begin{tikzpicture}
    \node[circle,fill=blue, inner sep=0pt, minimum size=4pt] (0) at (0,0) {};
    \node[circle,fill=blue, inner sep=0pt, minimum size=4pt] (1) at (0,2) {};
    \node[circle,fill=blue, inner sep=0pt, minimum size=4pt] (2) at (2,0) {};
    \draw [thick, draw=blue] (0) -- (1) {};
    \draw [thick, draw=blue] (0) -- (2) {};
    \draw [thick, draw=blue] (1) -- (2) {};
    
    \draw [thick, draw=gray, ->] (1.5, 1) -- (2.5,1) {};
    
    \node[circle,fill=blue, inner sep=0pt, minimum size=4pt] (00) at (3,0) {};
    \node[circle,fill=blue, inner sep=0pt, minimum size=4pt] (20) at (3,1) {};
    \node[circle,fill=blue, inner sep=0pt, minimum size=4pt] (01) at (3,2) {};
    \node[circle,fill=blue, inner sep=0pt, minimum size=4pt] (10) at (4,0) {};
    \node[circle,fill=blue, inner sep=0pt, minimum size=4pt] (02) at (5,0) {};
    \node[circle,fill=blue, inner sep=0pt, minimum size=4pt] (100) at (4,1) {};
    \draw [thick, draw=blue] (00) -- (01) {};
    \draw [thick, draw=blue] (00) -- (02) {};
    \draw [thick, draw=blue] (01) -- (02) {};
    \draw [thick, draw=blue] (20) -- (10) {};
    \draw [thick, draw=blue] (20) -- (100) {};
    \draw [thick, draw=blue] (10) -- (100) {};
    
    \draw [thick, draw=gray, ->] (4.5, 1) -- (5.5,1) {};
    
    \node[circle,fill=blue, inner sep=0pt, minimum size=4pt] (000) at (6,0) {};
    \node[circle,fill=blue, inner sep=0pt, minimum size=4pt] (020) at (6,1) {};
    \node[circle,fill=blue, inner sep=0pt, minimum size=4pt] (a) at (6,0.5) {};
    \node[circle,fill=blue, inner sep=0pt, minimum size=4pt] (b) at (6.5,0.5) {};
    \node[circle,fill=blue, inner sep=0pt, minimum size=4pt] (c) at (6.5,0) {};
    
    \node[circle,fill=blue, inner sep=0pt, minimum size=4pt] (001) at (6,2) {};
    \node[circle,fill=blue, inner sep=0pt, minimum size=4pt] (010) at (7,0) {};
    \node[circle,fill=blue, inner sep=0pt, minimum size=4pt] (d) at (7,0.5) {};
    \node[circle,fill=blue, inner sep=0pt, minimum size=4pt] (e) at (7.5,0.5) {};
    \node[circle,fill=blue, inner sep=0pt, minimum size=4pt] (f) at (7.5,0) {};
    
    \node[circle,fill=blue, inner sep=0pt, minimum size=4pt] (002) at (8,0) {};
    \node[circle,fill=blue, inner sep=0pt, minimum size=4pt] (0100) at (7,1) {};
    \node[circle,fill=blue, inner sep=0pt, minimum size=4pt] (g) at (6,1.5) {};
    \node[circle,fill=blue, inner sep=0pt, minimum size=4pt] (h) at (6.5,1.5) {};
    \node[circle,fill=blue, inner sep=0pt, minimum size=4pt] (i) at (6.5,1.0) {};
    
    \draw [thick, draw=blue] (000) -- (001) {};
    \draw [thick, draw=blue] (000) -- (002) {};
    \draw [thick, draw=blue] (001) -- (002) {};
    \draw [thick, draw=blue] (020) -- (010) {};
    \draw [thick, draw=blue] (020) -- (0100) {};
    \draw [thick, draw=blue] (010) -- (0100) {};
    
    \draw [thick, draw=blue] (a) -- (b) {};
    \draw [thick, draw=blue] (a) -- (c) {};
    \draw [thick, draw=blue] (b) -- (c) {};
    
    \draw [thick, draw=blue] (d) -- (e) {};
    \draw [thick, draw=blue] (d) -- (f) {};
    \draw [thick, draw=blue] (e) -- (f) {};
    
    \draw [thick, draw=blue] (g) -- (h) {};
    \draw [thick, draw=blue] (g) -- (i) {};
    \draw [thick, draw=blue] (h) -- (i) {};
    
    \draw [thick, draw=gray, ->] (7.5, 1) -- (8.5,1) {};
    
    \node () at (9,1) {\dots};

\end{tikzpicture}
\end{equation}

Where each map above sends vertices to the vertex in the same location in the image to the right.  Then each edge is subdivided in the image, introducing a new node and two new edges, and additional edges are introduced to fill in new interior triangles.

The maps are not simplicial because each edge is subdivided at each iteration.  One could also consider a filtration of simplicial complexes based on the finest Sierpinski mesh, but the advantage of the mapping construction is that it is easy to add another iteration of the subdivision to the end of the sequence without recomputing every space.  We compute the persistence barcode in \cref{fig:sierpinski_pd}.  There is a single connected component, and the $k$th iteration (starting at $k=0$) adds another $3^k$ $H_1$ classes.

\begin{figure}
    \centering
    \includegraphics[height=60mm]{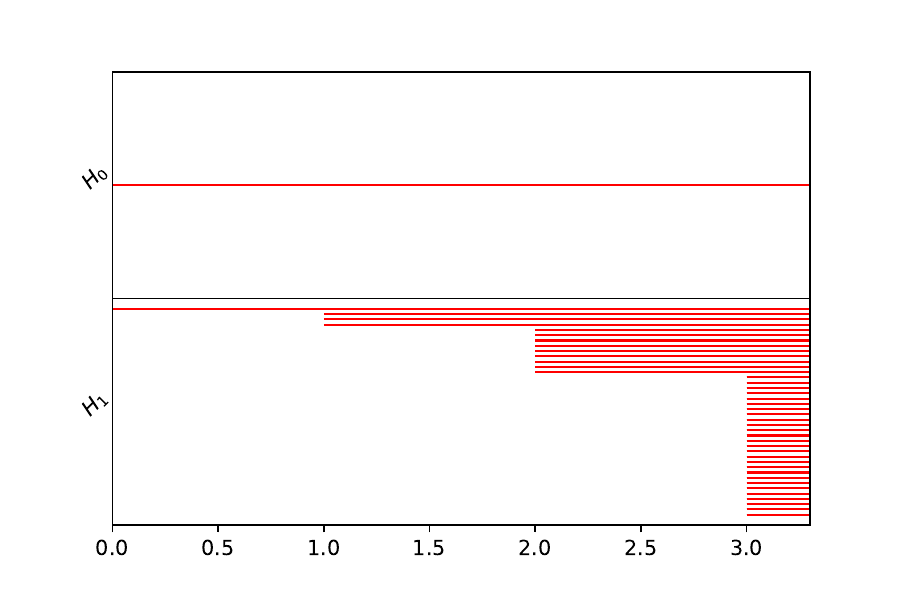}
    \caption{Persistence barcode showing 4 iterations of the sequence in \cref{eq:sierpinski_maps}. }
    \label{fig:sierpinski_pd}
\end{figure}

This is a fairly simple example in which the maps are easy to specify.  In general, it can be difficult to specify arbitrary cell maps, but there are potential algorithmic approaches such as those presented in \cite{nelsonThesis2020} that can be used to simplify the task.  An implementation is beyond the scope of this work, but our methodology opens up the possibility of using these constructions and general cellular maps in future work.

\subsection{Performance}\label{sec:performance}

We have now seen several examples in which zigzag and persistent homology might be computed using our methodology.  In this section, we'll discuss the performance of our algorithm and implementation in more detail.  The core BATS software is written in C++ with OpenMP used for parallelization. We have also written Python bindings for BATS, and used these to compare with Dionysus via its Python bindings. All timing results were obtained on an Intex Xeon CPU capable of running up to 24 threads simultaneously.

In \cref{fig:bats_dion}, we compare the runtime of our BATS software with Dionysus \cite{Dionysus2} using a zigzag diagram of Rips complexes, as introduced in \cref{sec:rips_subset_zigzag}.  Dionysus implements a version the algorithm described in \cite{ZZalg2009}, and the BATS software uses parallelization of the homology functor and the sequential quiver algorithm in these experiments.  We would like to draw attention to the drastic speedup achieved by BATS, which only improves as the length of the zigzag grows.  Note that this speedup can not be explained simply by parallelization, as there is already a large performance gap with only two samples (the zigzag has three spaces and two maps) where BATS can take advantage of at most 3 threads at a time.  The run time for BATS does not significantly increase until we take 16 samples (31 nodes in the zigzag), which is the first time that we have more spaces and maps than we have threads available for parallelization of the homology functor.  However, even after this point, especially for our experiments with 200 points, BATS still still sees an increasing performance gap compared to Dionysus.  There are many factors at play that determine the time it takes to execute these experiments, differing complexity bounds, and implementation-specific differences such as how BATS and Dionysus handle sparsity in linear algebra.

While BATS can be used very effectively in situations such as the sub-sample zigzag where large numbers of cells are added and removed in each space, we still expect Dionysus to be the better tool in situations in which the zigzag only adds and removes a small number of simplices from a large space at each node in the zigzag.  This is because our methodology would do enormous amounts of redundant computation to compute homology of each space in this situation, which would only be partially alleviated by parallelization if the zigzag is sufficiently long.  One might attempt to share the redundant computation between nodes, but such efforts are outside the scope of this work.

\begin{figure}
    \centering
    \includegraphics[width=0.49\linewidth]{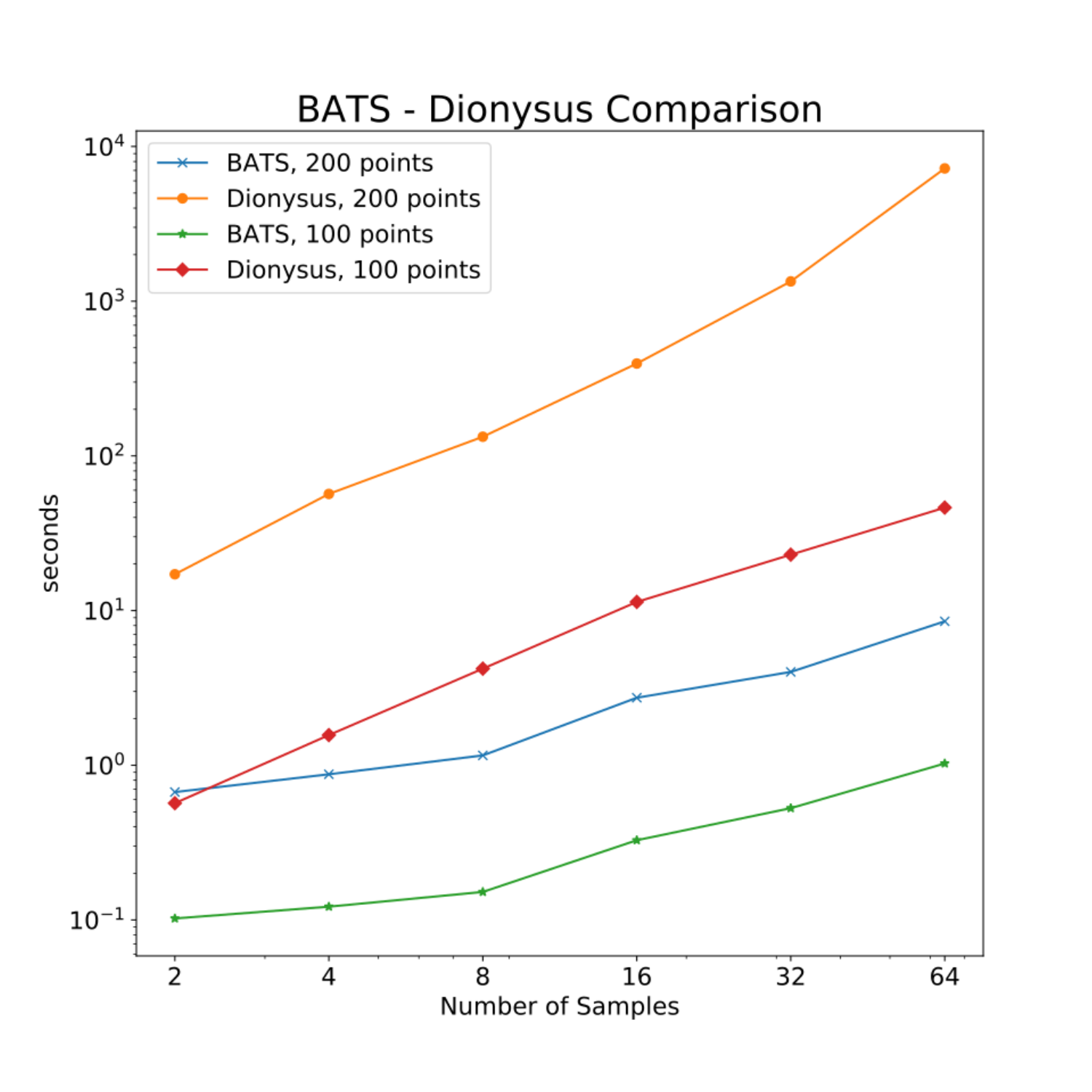}
    \includegraphics[width=0.49\linewidth]{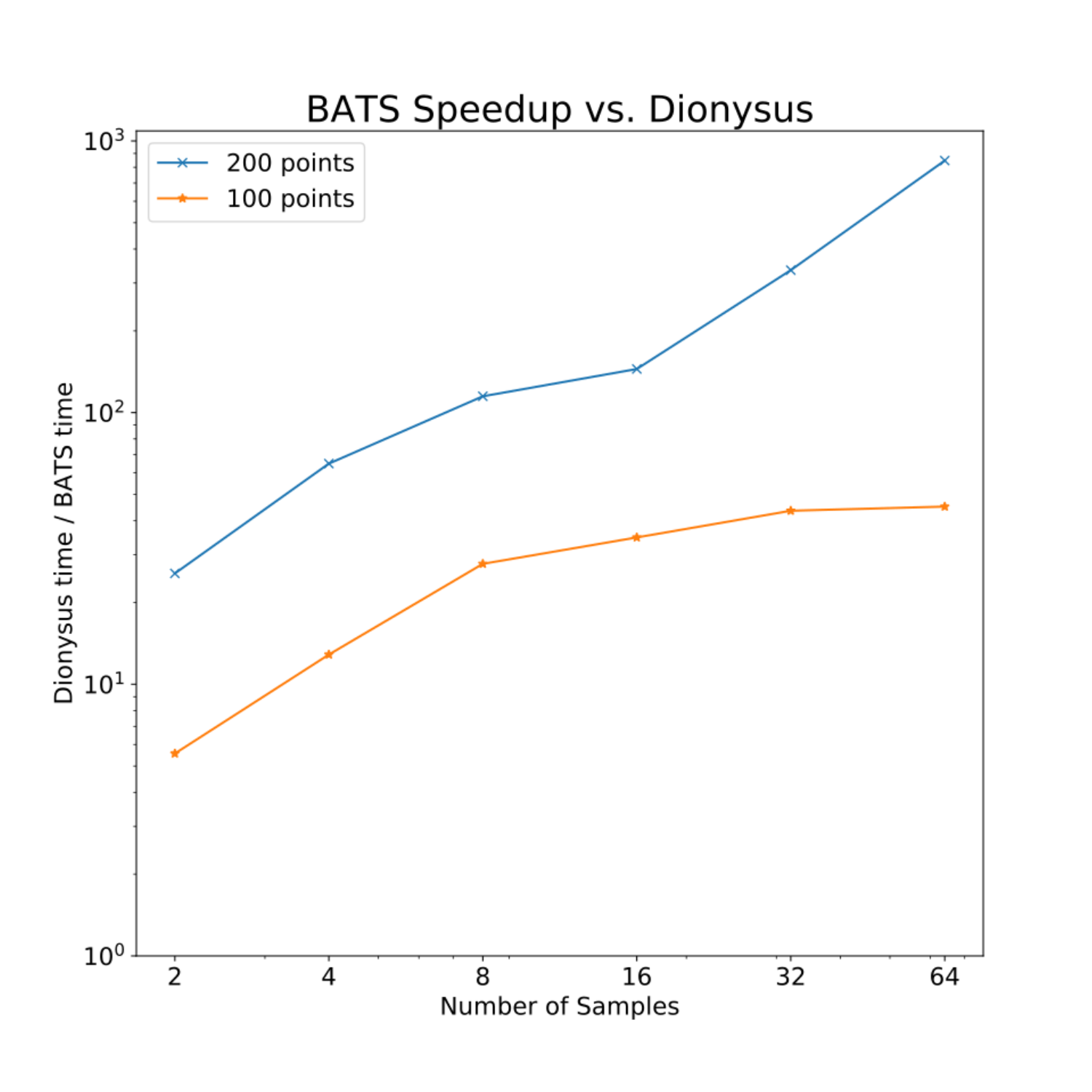}
    \caption{Time to compute the zigzag barcode of a zigzag diagram computed on sub-samples of a noisy circle and pairwise unions.  Normal noise with variance 0.1 is added to points sampled from a unit circle. In one experiment, samples of 100 points are obtained, and in another samples of 200 points are obtained.  The Rips parameter is set to $r=0.35$ in each case.   The horizontal axis indicates the number of samples used in the diagram.
    Left: time to compute zigzag homology for both BATS and Dionysus in each experiment.  Right: Speedup seen using BATS instead of Dionysus. BATS always outperforms Dionysus on these experiments, with a 1000x speedup on the most expensive computation.  See the text for further discussion.}
    \label{fig:bats_dion}
\end{figure}

\begin{figure}
    \centering
    \includegraphics[height=60mm]{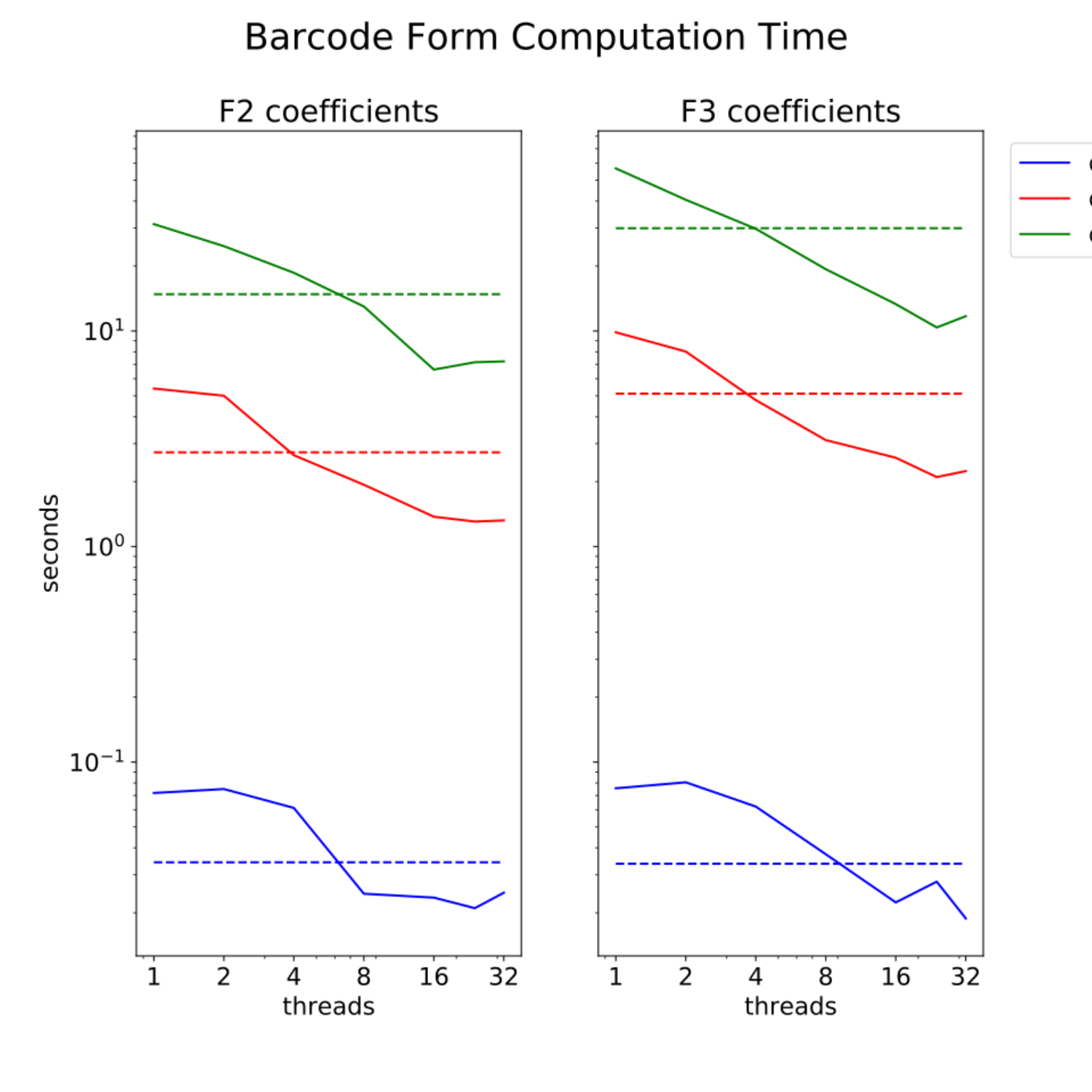}
    \caption{Time to compute barcodes of random quiver representations with fixed dimension $d$ in each vector space, as the number of OpenMP threads increases.  The length $n$ of the quiver is always $256$.  Left: the field $\FF_2$ is used, right: the field $\FF_3$ is used.  $d\times d$ matrices are generated on each edge where every entry has a 50\% chance of being non-zero.  The horizontal dashed lines are the time to run the sequential algorithm.  The solid lines indicate the time to run the divide-and-conquer algorithm, which decreases as the number of threads increase. We see that the divide and conquer algorithm is slower if only a few threads are used, but faster for larger numbers of threads.  See the text for further discussion. }
    \label{fig:dq_scaling1}
\end{figure}

We now turn to measuring the performance of the divide-and-conquer algorithm for computing a barcode, which is parallelized in BATS using OpenMP tasks.  In \cref{fig:dq_scaling1}, we see how the time to compute a barcode decreases as the number of available threads increases.  In the experiments performed the divide-and-conquer algorithm starts out as slower than the sequential implementation due to increased overhead, but starts to outperform the sequential algorithm at around 8 threads.  From this, we see that the sequential algorithm should be preferred when a sufficient number of processes are not available.  The threshold where the divide-and-conquer algorithm will be faster than sequential algorithm will be machine and problem dependent.

\begin{figure}
    \centering
    \includegraphics[height=60mm]{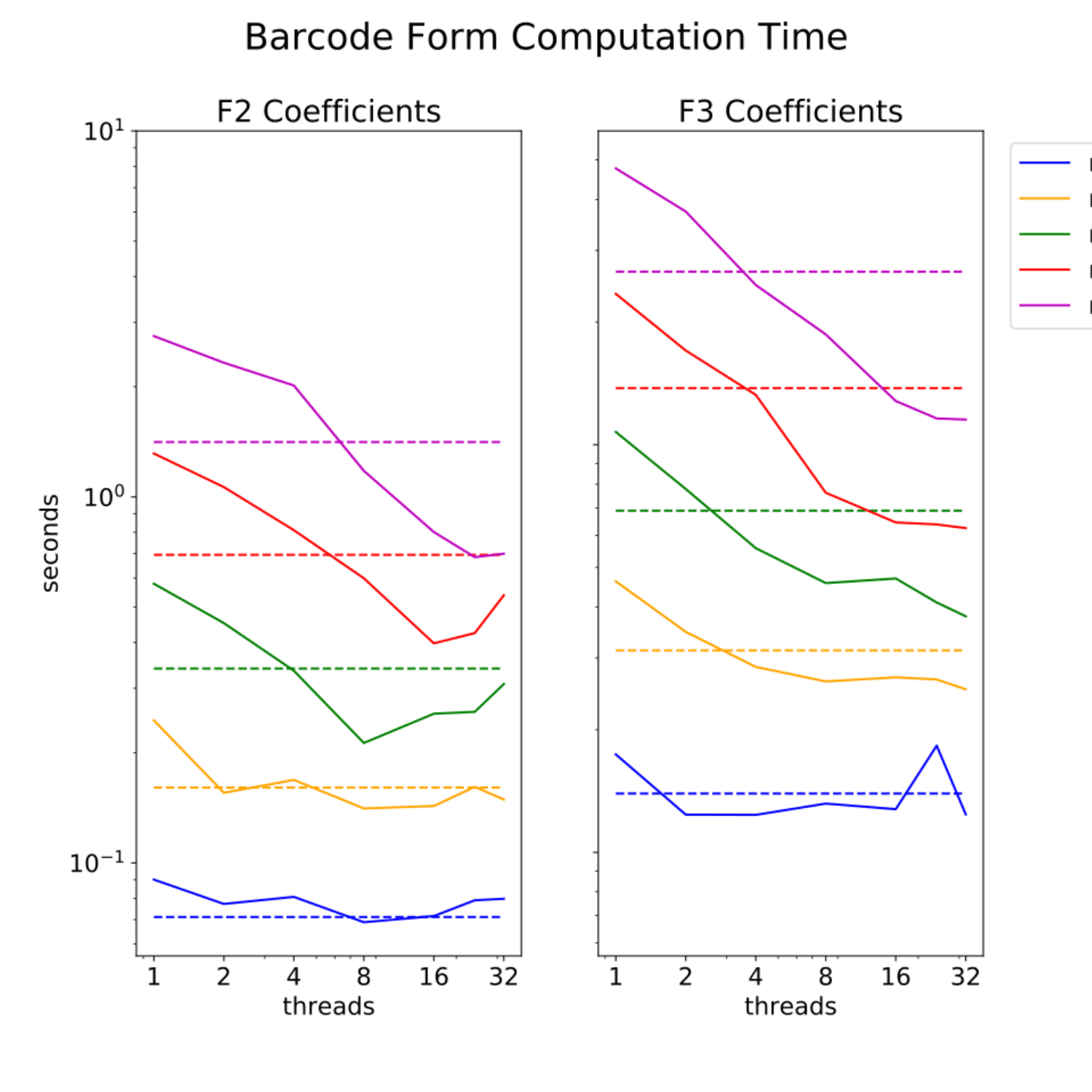}
    \caption{Time to compute barcodes of random quiver representations with fixed dimension $d=100$ in each vector space, as the number of OpenMP threads increases.  Left: the field $\FF_2$ is used, right: the field $\FF_3$ is used.  $d\times d$ matrices are generated on each edge where every entry has a 50\% chance of being non-zero.  The horizontal dashed lines are the time to run the sequential algorithm.  The solid lines indicate the time to run the divide-and-conquer algorithm, which decreases as the number of threads increase. If the length of the quiver, $n$ is small, the divide and conquer algorithm does not reach enough levels of recursion to see a noticeable benefit.  See the text for further discussion. }
    \label{fig:dq_scaling2}
\end{figure}

In \cref{fig:dq_scaling2}, we observe that because the divide-and-conquer algorithm must recurse to a sufficient depth to take full advantage of available parallelism, small problems (small values of $n$ in the plot) do not see a significant advantage compared to the sequential algorithm.  For larger values of $n$, the machine is able to use all threads, which results in a noticeable speedup.

\begin{figure}
    \centering
    \includegraphics[height=60mm]{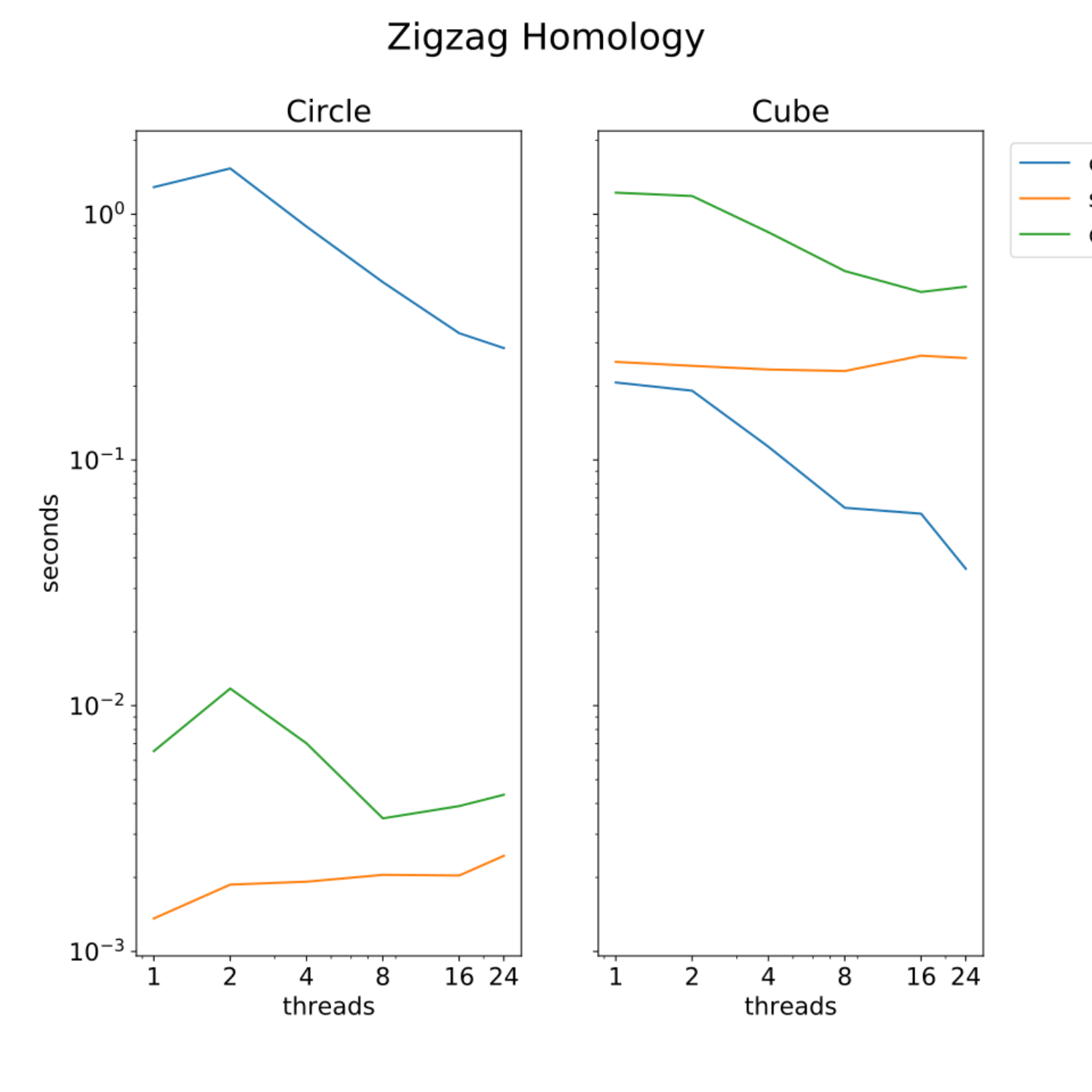}
    \caption{Time to compute zigzag homology of a zigzag diagram computed on 128 sub-samples and pairwise unions.  Left: samples obtained from a noisy circle, with 100 points per subset and Rips parameter $r=0.4$. Computing the homology functor (blue line) dominates the computation time, and because the homology is low-dimensional computing the barcode is very fast using either the sequential or divide and conquer algorithms.
    Right: samples obtained from the uniform distribution in the unit cube, with 400 points per subset and Rips parameter $r=0.03$.  Computing homology is very fast, but now the 0-dimensional homology is large so computing the barcode is more expensive.  See the text for further discussion. }
    \label{fig:perf_breakdown}
\end{figure}

Finally, we investigate how computation time differs on two different problems.  In \cref{fig:perf_breakdown}, we compute a zigzag barcode using Rips complexes built from subsamples of larger data sets.  Our methodology breaks down computation of a barcode into two steps: first, we compute induced maps on homology using the Chain and Hom functors, and second, we compute the barcode using either the sequential algorithm or divide-and conquer algorithm.

In the circle example (\cref{fig:perf_breakdown}, left hand side), the Rips parameter is chosen to be sufficiently high so that there will almost always be a single homology generator in both $H_0$ and $H_1$ at each node of the quiver, with identity maps along edges.  Computing homology is fairly expensive in this situation, and we see that the homology computation dominates the run time.  The time to compute homology decreases as we increase the number of available threads due to the embarassingly parallel algorithm to compute induced maps.  Both the sequential and divide-and-conquer algorithms are very fast by comparison, with the sequential option winning, likely due to the increased overhead necessary to employ divide-and-conquer.

In the cube example (\cref{fig:perf_breakdown}, right hand size), the Rips parameter is chosen to be small so there will be many connected components and $H_0$ will be roughly 200 dimensional for each subsample.  Computing homology is very cheap in this situation, and we see the homology computation is the fastest part of the computation. We still see a nice parallel speedup.  However, because homology is high-dimensional, the quiver algorithms take longer.  We see that the run time of the sequential algorithm does not depend on the number of threads, and that divide-and-conquer algorithm runs faster as we increase the number of threads.  In contrast to \cref{fig:dq_scaling1} and \cref{fig:dq_scaling2}, divide and conquer never runs faster than the sequential algorithm.  This is likely due to sparsity in the induced maps, which makes the true cost of factorizations cheap and decreases the relative advantage of using divide and conquer while keeping the overhead costs.

To conclude this section, we note that there are a variety of factors that go into choosing whether to use the sequential or divide-and-conquer algorithm for computing barcodes.  Our experiments demonstrate a parallel speedup for divide-and-conquer which pays off given a large enough problem size and sufficient parallelism.  The point at which divide-and-conquer will outperform the sequential algorithm will be machine dependent as well as problem dependent, and as we saw in \cref{fig:perf_breakdown}, sparsity in induced maps may play a role in determining which choice is better in practice.

One current limitation of our experiments is that they were performed on a single machine.  Our methodology could be employed in a massively parallel manner on a network of machines using MPI as well as OpenMP.  In situations where one might not even be able to fit the entire computation on a single machine,the divide and conquer approach may become more appealing in a broader class of situations.  These large-scale experiments are outside the scope of this work.

\section{Conclusion \& Future Directions}
We have presented a matrix factorization method for computing the indecomposables (barcodes) of finite type A quiver representations.  A natural question that arises is whether similar algorithms might exist for other types of quiver representations, which then might be used to extract information about different diagrams of spaces.  Representations on the other Dynkin diagrams almost certainly are amenable to similar techniques, as a result of Gabriel's theorem \cite{gabrielI}, but it would be necessary to go beyond sequential application of LU factorizations.  However, it is questionable whether type-D and type-E quiver representaitons would find applications in applied topology.  A more promising direction is to develop general algorithms for type-$\tilde{A}$ representations, whose underlying graphs discretize a circle.  Burghelea \cite{burgheleaNewTopologicalInvariants2018} has shown topologicially interesting applications, and in collaboration with Dey \cite{burgheleaTopologicalPersistenceCircleValued2013a} developed an algorithm that works in certain limited situations.

It is generally much more difficult to compute invariants of wild-type quiver representations, as has been seen in multi-parameter persistence \cite{carlssonTheoryMultidimensionalPersistence2009}.  However, one might hope that for certain {\em specialized} situations, the problem becomes more tractable.  This could be through choosing a specific field $\FF$ to work with, or by considering situations in which induced maps on homology satisfy certain properties.

We have considered using arbitrary induced maps on homology in our quiver representations.  In the special case of inclusion maps, enormous compression can be achieved by working at the chain level.  One might ask whether there are other situations in which such compression is achievable. Finally, we have worked exclusively with homology, whereas cohomology has additional structure in the form of various cohomology operations \cite{MosherTangora}. An interesting question is whether these cohomology operations could potentially aid in accelerating computations when computing barcodes for all dimensions simultaneously, both at the chain level, and at the level of induced maps.

\begin{acknowledgements}
 We thank Jonathan Taylor for extensive comments and suggestions.
\end{acknowledgements}

%
\section*{Conflict of interest}

The authors declare that they have no conflict of interest.

\bibliographystyle{spmpsci}      
\bibliography{references}   


\end{document}